\pgfplotsset{compat=1.18}
\newcommand{\Atld}{\widetilde{A}}
\newcommand{\btld}{\widetilde{b}}
\newcommand{\dhat}{\widehat{d}}
\newcommand{\ddelta}{\delta d}
\newcommand{\ddeltah}{\delta \widehat{d}}
\newcommand{\Rhat}{\widehat{R}}
\newcommand{\rhat}{\widehat{r}}
\newcommand{\rdelta}{\delta r}
\newcommand{\shat}{\widehat{s}}
\newcommand{\xhat}{\widehat{x}}
\newcommand{\xdelta}{\delta x}
\newcommand{\Yhat}{\widehat{Y}}
\newcommand{\Zhat}{\widehat{Z}}
\newcommand{\what}{\widehat{w}}
\newtheorem{cor}[theorem]{Corollary}
\newlength\figureheight
\newlength\figurewidth
\renewcommand{\algorithmicrequire}{\textbf{Input: }}
\renewcommand{\algorithmicensure}{\textbf{Output: }}
\renewcommand{\algorithmiccomment}[1]{\bgroup\hfill//~#1\egroup}
\title{Mixed precision sketching for least-squares problems and its application in GMRES-based iterative refinement}
\author{Erin Carson\thanks{Faculty of Mathematics and Physics, Charles University, carson@karlin.mff.cuni.cz.}  \and  Ieva Dau\v{z}ickait\.{e}\thanks{CERFACS, Toulouse, France, dauzickaite@cerfacs.fr.}\\
Both authors were supported by Charles University Research Centre program No. UNCE/24/SCI/005, the Exascale Computing Project (17-SC-20-SC), a collaborative effort of the U.S. Department of Energy Office of Science and the National Nuclear Security Administration, and by the European Union (ERC, inEXASCALE, 101075632). Views and opinions expressed are those of the authors only and do not necessarily reflect those of the European Union
or the European Research Council. Neither the European Union nor the granting authority can be held responsible
for them.}
\begin{document}

\maketitle

\begin{abstract}
   Sketching-based preconditioners have been shown to accelerate the solution of dense least-squares problems with coefficient matrices having substantially more rows than columns. The cost of generating these preconditioners can be reduced by employing low precision floating-point formats for all or part of the computations. We perform finite precision analysis of a mixed precision algorithm that computes the $R$-factor of a QR factorization of the sketched coefficient matrix. Two precisions can be chosen and the analysis allows understanding how to set these precisions to exploit the potential benefits of low precision formats and still guarantee an effective preconditioner. If the nature of the least-squares problem requires a solution with a small forward error, then mixed precision iterative refinement (IR) may be needed. For ill-conditioned problems the GMRES-based IR approach can be used, but good preconditioner is crucial to ensure convergence. We theoretically show when the sketching-based preconditioner can guarantee that the GMRES-based IR reduces the relative forward error of the least-squares solution and the residual to the level of the working precision unit roundoff. Small numerical examples illustrate the analysis.
\end{abstract}

\begin{keywords}
mixed precision, iterative refinement, least-squares, randomized preconditioning
\end{keywords}

\begin{AMS}
65F08, 65F10, 65F20, 65G50
\end{AMS}

\section{Introduction}

Let $A$ be an $m\times n$ matrix with full-rank and $m \gg n$, let $b$ be a length-$m$ vector, and suppose we want to solve the least squares problem 
\begin{equation}\label{eq:LS_problem}
    \min_x \Vert b-Ax\Vert_2.
\end{equation}
A variety of methods, such as Least Squares QR (LSQR) can be used to find $x$ that minimizes the residual \cite{paige1982lsqr}. Rokhlin and Tygert showed that preconditioning a dense $A$ with the $R$ factor of the QR decomposition of a sketched $A$ can greatly reduce the condition number \cite{rokhlin2008tygert}. An efficient implementation of these ideas in a BLENDENPIK solver has been shown to reduce the LSQR iteration count and the wall-clock time \cite{avron2010blendenpik}. The authors in \cite{georgiou2023mixed} employ mixed precision to generate this preconditioner and thus improve performance. 

The available finite precision analysis for generating the randomized preconditioner assumes that all operations are performed in a uniform precision, and $A$ is well-conditioned with respect to this precision. In this work, we provide a more general analysis of a mixed precision setting where the sketching operation and the QR decomposition are computed in two possibly different precisions. Our main result assumes that $A$ is not too ill-conditioned with respect to the sketching precision, but we provide some comments on cases that do not satisfy this assumption.

In recent work, randomized preconditioning has been combined with fixed precision iterative refinement (IR) to ensure a backward stable solution to \eqref{eq:LS_problem} \cite{epperly2024fast}. We consider combining the mixed precision randomized preconditioning with IR with a different objective: we aim to refine both the computed solution and residual of \eqref{eq:LS_problem} so that their relative \textit{forward error} reaches the level of unit roundoff of the working precision. This may be particularly relevant when a low precision format is used as the working precision. A popular way to achieve this goal is through the use of a mixed precision GMRES-based iterative refinement scheme for least-squares (LSIR). Here, we first solve the least-squares problem via LSQR with randomized preconditioning and then use iterative refinement on the augmented system
\[
\begin{bmatrix}
 I & A\\ A^T & 0
\end{bmatrix}
\begin{bmatrix}
r\\  x
\end{bmatrix}
=
\begin{bmatrix}
 b \\0
\end{bmatrix}.
\]
The augmented system LSIR approach was introduced and analysed (when solved via a QR decomposition of $A$) by Bj\"{o}rck 
\cite{bjorck1967solving} and is the only approach where the residual is refined explicitly. The scheme was extended to a more general setting
in previous work \cite{carson2020three}, where a linear system with the augmented coefficient matrix in each refinement step is solved via left-preconditioned GMRES. The left preconditioner was constructed of QR factors of $A$, assumed to be computed via Householder QR in some precision $u_f$. Despite that the QR factorization is computed in a potentially lower precision, its cost may still be significant (i.e., reducing the precision does not reduce the latency cost). 
Here we seek to use the already computed randomized $R$ factor in the preconditioner. 
We define the preconditioned augmented system to be 
\begin{equation}\label{eq:ir_system_split_prec}
\underbrace{\begin{bmatrix}
I & 0 \\ 0 & R^{-T} 
\end{bmatrix}}_{M_L^{-1}}
\underbrace{\begin{bmatrix}
 I & A\\ A^T & 0
\end{bmatrix}}_{\Atld}
\underbrace{\begin{bmatrix}
I & 0 \\ 0 & R^{-1} 
\end{bmatrix}}_{M_R^{-1}}
\begin{bmatrix}
\hat{r}\\  \hat{x}
\end{bmatrix}
=
\begin{bmatrix}
I & 0 \\ 0 & R^{-T} 
\end{bmatrix}
\begin{bmatrix}
 b \\0
\end{bmatrix},
\end{equation}
with 
\[
\begin{bmatrix}
I & 0 \\ 0 & R^{-1} 
\end{bmatrix}
\begin{bmatrix}
\hat{r}\\  \hat{x}
\end{bmatrix}
= 
\begin{bmatrix}
r\\ x
\end{bmatrix}.
\]

IR convergence can be guaranteed if the augmented systems are solved via a backward stable solver and the relative forward error is reduced in each iteration \cite{carson2020three}. 
GMRES with right preconditioning may be unstable \cite{arioli2007note} and we thus use its flexible variant FGMRES to solve \eqref{eq:ir_system_split_prec}. We note that other approaches to mixed precision least-squares iterative refinement have been proposed, however only the augmented system approach has convergence guarantees for both $x$ and $r$; see, e.g., \cite{bjorck1967solving,carson2024comparison}.

The goal of this work is twofold: to analyze the randomized sketching in two precisions and to determine when this approach to LSIR is guaranteed to converge, and give performance benefits. The theoretical analysis uses results for FGMRES backward stability and requires bounds for the condition number of the preconditioned augmented matrix.
Our primary contributions are as follows. 
\begin{itemize}
    \item We extend the finite precision analysis for generating a randomized preconditioner to the two precision case.
    \item  We provide the first theoretical proof that something other than a regular QR decomposition can provide a preconditioner of sufficient quality for LSIR to converge. 
    \item  We provide the first theoretical proof that FGMRES with split preconditioning works for LSIR rather than GMRES restricted to left preconditioning.
\end{itemize}

The paper is structured as follows. We review theoretical results on sketching in Section~\ref{sec:sketching_background} and analyze the two precision version of the sketched QR decomposition algorithm in Section~\ref{sec:sketching_finite_precision}. The LSIR method with a  randomized preconditioner is analyzed in Section~\ref{sec:lsir}, followed by numerical examples in Section~\ref{sec:numerics}. We conclude in Section~\ref{sec:conclusions}.

\section{Background on sketching}\label{sec:sketching_background}

The sketching matrix is often chosen so that the sketch preserves some qualities of $A$. Preserving the norm of a matrix-vector product is important in many applications. This may be achieved using a subspace embedding, which we define in the following.
\begin{definition}
    Consider a subspace $\mathbb{F} \subseteq \mathbb{R}^m$, a linear operator $\Omega : \mathbb{R}^{m} \to \mathbb{R}^{s}$, and a distortion parameter $\epsilon \in (0,1)$. $\Omega$ is called an $l_2$ $\epsilon$-subspace embedding if for every $x \in \mathbb{F}$ we have
    \begin{equation*}
        \sqrt{1 - \epsilon} \, \Vert x \Vert_2 \leq \Vert \Omega x \Vert_2 \leq \sqrt{1 + \epsilon} \, \Vert x \Vert_2.
    \end{equation*}
\end{definition}
Note that constructing such an $\Omega$ requires some knowledge of $\mathbb{F}$. To relax this requirement, the following subset of subspace embeddings is often considered.
\begin{definition}
     A linear operator $\Omega : \mathbb{R}^{m} \to \mathbb{R}^{s}$ is called an oblivious $(\epsilon,\delta,n)$-subspace embedding if it is a subspace embedding for any $n$-dimensional $\mathbb{F} \subseteq \mathbb{R}^m$ with probability at least $1-\delta$.
\end{definition}

We can set $\mathbb{F} = range(A)$ and then for every $y \in \mathbb{R}^n$ we have
    \begin{equation*}
        \sqrt{1 - \epsilon} \, \Vert Ay \Vert_2 \leq \Vert \Omega Ay \Vert_2 \leq \sqrt{1 + \epsilon} \, \Vert Ay \Vert_2.
    \end{equation*}
Using this with the definition of largest and smallest singular values, we can easily obtain the bounds
\begin{gather*}
    \sqrt{1 - \epsilon} \, \sigma_{min} (A) \leq \sigma_{min} (\Omega A) \leq \sigma_{max} (\Omega A) \leq  \sqrt{1 + \epsilon} \,  \sigma_{max} (A), \\
    \frac{\sigma_{min} (\Omega A)}{\sqrt{1 + \epsilon} } \leq \sigma_{min} ( A) \leq \sigma_{max} ( A) \leq  \frac{\sigma_{max} (\Omega A)}{\sqrt{1 - \epsilon} }, 
\end{gather*}
which give
\begin{equation*}
     \sqrt{ \frac{1 - \epsilon}{1 + \epsilon} } \kappa(A) \leq \kappa(\Omega A) \leq   \sqrt{ \frac{1 + \epsilon}{1 - \epsilon} }  \kappa(A).
\end{equation*}
Thus sketching with an $\epsilon$-subspace embedding approximately preserves the extreme singular values and we can expect $\kappa(\Omega A)$ to be close to $\kappa(A)$.

An embedding that has been theoretically analyzed and widely used is a Gaussian matrix, namely, $\Omega \in \mathbb{R}^{s \times m}$ with independent entries drawn at random from $\mathcal{N}(0,1/s)$ with an appropriately set $s$. Note that $\Omega$ is dense and thus for large problems storage and computing matrix-vector products can be expensive. Good quality results for sketching tall and skinny dense matrices are usually obtained by setting $s=cn$ with $c$ being small; see, for example, \cite{georgiou2023mixed} where $c \leq 4$.

Another well-studied and frequently used embedding, which has a sparse structure, is a sparse sign matrix $\Omega = \sqrt{m/\zeta} \begin{pmatrix}
    \omega_1 & \omega_2 & \dots & \omega_m
\end{pmatrix}$, where $\omega_j$ has $\zeta$ (with $2\leq \zeta\leq s$) entries chosen randomly and uniformly and set to $1$ or $-1$ with probability $1/2$; all other entries in $\omega_j$ are set to zero, see, e.g. \cite[Section 9.2]{martinsson2020randomized},\cite{dong2023simpler},\cite{cohen2016nearly}. Such $\Omega$ thus randomly samples and adds/subtracts some rows of $A$. Although the theoretical results require more samples than for the Gaussian embeddings, reliable results are obtained with $s$ being a small multiple of $n$.

It is known that if $\Omega$ is an $s \times m$ matrix such that $\Omega A$ is full rank and $\Omega A = Q P$ is a decomposition where columns of the $s \times n$ matrix $Q$ are orthonormal and $P$ is any $n \times n$ matrix, then in infinite precision 
\[\kappa(AP^{-1}) = \kappa(\Omega U) = \kappa(\Omega Q_A), \]
where $A = U \Sigma V^T$ is the economic SVD and $A = Q_A R_A$ is the economic QR factorization; see \cite[Theorem 1]{rokhlin2008tygert} and \cite[Lemma 2.1]{meier2024sketch}. We can also show that
\begin{equation*}
    \resizebox{.99\hsize}{!}{$\Vert A P^{-1} \Vert_2 = \Vert R_A P^{-1} \Vert_2 =\Vert ( \Omega Q_A)^{\dagger} \Omega Q_A R_A P^{-1} \Vert_2 \\ 
    = \Vert ( \Omega Q_A)^{\dagger} Q P P^{-1} \Vert_2 = \Vert ( \Omega Q_A)^{\dagger} \Vert_2,$} 
\end{equation*}
where $(\Omega Q_A)^{\dagger}$ is the Moore-Penrose pseudoinverse of $\Omega Q_A$ and the last equality is due to the columns of $Q$ being the orthonormal basis for $\textrm{range}(\Omega Q_A)$. 
The reduction of the norm and the condition number is thus determined by how well the sketching operator approximates the basis for the range of $A$. In our case, we set $P$ to be the $R$-factor of the economic QR decomposition $\Omega A = QR$. The norm reduction can be expressed via the subspace embedding distortion parameter $\epsilon$ as \cite[Proposition 5.4]{kireeva2024randomized}
\begin{equation*}
    \Vert A R^{-1} \Vert_2 \leq \frac{1}{\sqrt{1 - \epsilon}} \quad \textrm{and} \quad
    \Vert (A R^{-1})^{\dagger} \Vert_2 \leq \sqrt{1 + \epsilon}.
\end{equation*}

\section{Sketching in two precisions}\label{sec:sketching_finite_precision}

We perform a finite precision analysis of Algorithm~\ref{alg:generating_R}, where the sketching and QR steps can use possibly different precisions with unit roundoffs $u_s$ and $u_{QR}$. We consider a general sketching operator $\Omega$, and assume that $A$ is full rank. $R$ denotes the $R$-factor of the economic QR decomposition of the sketched matrix $\Omega A$, that is, $\Omega A = QR$. A standard model of floating point arithmetic is used, where in the bounds we will make use of the quantities
\begin{gather*}
    \gamma_n^{(p)} = \frac{n u_p}{1 - n u_p} \quad  \textrm{and} \quad \widetilde{ \gamma}_n^{(p)} = \frac{cn u_p}{1 - c n u_p},
\end{gather*}
where $c$ is a small constant that does not depend on $n$ \cite[Section 2.2]{high:ASNA2}. We make the standard assumption that no overflow or underflow occurs. In the following, hats denote computed quantities, that is, $\Rhat$ is the computed version of $R$.

\begin{algorithm}
\caption{Randomized sketching based approximation of the $R$ factor of the QR factorization of matrix $A$ in precisions $u_s$ and $u_{QR}$ }\label{alg:generating_R}
\algorithmicrequire  $A \in \mathbb{R}^{m \times n}$ of full rank stored in precision $u_s$, sketching matrix/operator $\Omega$ such that $\Omega A \in \mathbb{R}^{s \times n}$ \\
\algorithmicensure  $R$ factor of the QR decomposition of sketched $A$
\begin{algorithmic}[1]
\State Compute the sketch $Y = \Omega A$ \Comment{$u_s$} \label{step:sketch}
\State Compute an economic Householder QR: $Y = QR$ \Comment{$u_{QR}$}
\end{algorithmic}
\end{algorithm}

\subsection{Computing $R$}
We obtain bounds in relation to the exact sketched matrix $\Omega A$ and the exact preconditioned matrix $A R^{-1}$. This allows us to use results that are available in the literature for the exact arithmetic case. We comment on the results in the following subsection.
\begin{theorem}\label{th:finite_recision_error_R}
    Consider $\Yhat = \Omega A + \Delta_s$ computed in step~\ref{step:sketch} of Algorithm~\ref{alg:generating_R}, where $\Delta_s$ accounts for the errors in casting $A$ to $u_s$ and computing the sketch. If $\Omega$, $u_s$, and $u_{QR}$ are set so that
    \begin{gather}
         (2 \log_2n + 4) n^{1/2} \widetilde{\gamma}_{sn}^{(QR)}  \kappa_2(\Yhat) < 1 \quad \text{ and } \label{eq:assumption_u_qr} \\
          (2 \log_2n + 4) n^{1/2} \Vert (\Omega A)^{\dagger} \Vert_2 \Vert \Delta_s \Vert_2 <1 \label{eq:assumption_u_s}
    \end{gather}
    and we denote
  \begin{align*}
        \beta = & \, \left( 1 + 18 (2 \log_2n + 4) n^{1/2} \widetilde{\gamma}_{sn}^{(QR)}  \kappa_2(\Omega A)\right)\\
     & \, \times \left(1 + 2(2 \log_2n + 4) n^{1/2} \Vert (\Omega A)^{\dagger} \Vert_2 \Vert \Delta_s \Vert_2 \right),
    \end{align*}
    then $\Rhat$ satisfies the following:
    \begin{align}
        \Vert \Rhat \Vert_2 & \leq \beta \Vert \Omega A \Vert_2, \label{eq:Rhat_norm_bound}\\
        \Vert \Rhat^{-1} \Vert_2 & \leq \beta \Vert (\Omega A)^{\dagger} \Vert_2, \label{eq:Rhat_inv_norm_bound}\\
        \kappa_2(\Rhat) & \leq \beta^2 \kappa_2(\Omega A), \label{eq:Rhat_cond_bound} \\
        \Vert A \Rhat^{-1} \Vert_2 & \leq \beta \Vert AR^{-1} \Vert_2, \label{eq:ARhat_inv_norm_bound}\\
        \Vert (A \Rhat^{-1})^{\dagger} \Vert_2 & \leq \beta \Vert (AR^{-1})^{\dagger} \Vert_2, \label{eq:ARhat_inv_pinv_norm_bound}\\
         \kappa_2(A \Rhat^{-1}) & \leq \beta^2 \kappa_2( A R^{-1}). \label{eq:ARhat_inv_cond_bound}
    \end{align}
\end{theorem}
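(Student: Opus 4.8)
The plan is to split the analysis into two independent sources of error: the error $\Delta_s$ incurred in step~\ref{step:sketch} (casting $A$ to precision $u_s$ and forming $\Omega A$), and the backward error of Householder QR applied to $\Yhat$ in precision $u_{QR}$. For the latter I would invoke the standard rounding-error result for Householder QR: the computed $\Rhat$ is the exact $R$-factor of a perturbed matrix $\Yhat + \Delta_{QR}$ with $\Vert \Delta_{QR}\Vert_2 \le f(s,n)\,\widetilde\gamma_{sn}^{(QR)}\Vert\Yhat\Vert_2$, where $f(s,n)$ carries the $(2\log_2 n+4)n^{1/2}$-type factor coming from the column-wise-to-norm conversion (this is why that peculiar constant appears in the hypotheses). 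So altogether $\Rhat$ is the exact $R$-factor of $\Omega A + \Delta_s + \Delta_{QR} =: \Omega A + E$, and I would bound $\Vert E\Vert_2$ in terms of $\Vert\Delta_s\Vert_2$ and $\widetilde\gamma_{sn}^{(QR)}\Vert\Omega A\Vert_2$, using assumptions \eqref{eq:assumption_u_qr}--\eqref{eq:assumption_u_s} to absorb the $\Vert\Yhat\Vert_2$ and $\Vert\Delta_s\Vert_2$ terms and keep everything relative to $\Omega A$.

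Once $\Rhat$ is exhibited as the exact $R$-factor of $\Omega A + E$, the remaining task is purely a perturbation-theory exercise for QR factorizations. Since $Q$ has orthonormal columns, $\Vert\Rhat\Vert_2 = \Vert Q\Rhat\Vert_2 = \Vert \Omega A + E\Vert_2 \le (1+\Vert E\Vert_2/\Vert\Omega A\Vert_2)\Vert\Omega A\Vert_2$, which gives \eqref{eq:Rhat_norm_bound} after identifying the bracket with (a piece of) $\beta$. For \eqref{eq:Rhat_inv_norm_bound} I would use $\Vert\Rhat^{-1}\Vert_2 = \Vert(\Omega A+E)^\dagger\Vert_2$ (valid because $\Rhat$ is square and invertible, so $(\Omega A+E)^\dagger = \Rhat^{-1}Q^T$) and then apply the standard pseudoinverse perturbation bound $\Vert(\Omega A+E)^\dagger\Vert_2 \le \Vert(\Omega A)^\dagger\Vert_2/(1-\Vert(\Omega A)^\dagger\Vert_2\Vert E\Vert_2)$, again using the hypotheses to control the denominator. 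Multiplying \eqref{eq:Rhat_norm_bound} and \eqref{eq:Rhat_inv_norm_bound} yields \eqref{eq:Rhat_cond_bound}. The factor structure of $\beta$ — a product of a $u_{QR}$-term and a $u_s$-term — should fall out naturally: the $u_{QR}$ contribution of $E$ scales with $\widetilde\gamma_{sn}^{(QR)}\kappa_2(\Omega A)$ after the $\Vert\Yhat\Vert_2\le(1+o(1))\Vert\Omega A\Vert_2$ step, and the $u_s$ contribution with $\Vert(\Omega A)^\dagger\Vert_2\Vert\Delta_s\Vert_2$; the numeric constants $18$ and $2$ are what I would expect to drop out of carefully tracking the geometric-series bounds.

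For the three $A\Rhat^{-1}$ bounds I would write $A\Rhat^{-1} = (AR^{-1})\,(R\Rhat^{-1})$ and control the ``correction'' $R\Rhat^{-1}$. Since both $R$ and $\Rhat$ are $R$-factors of matrices differing by $E$ (with $\Omega A = QR$ exactly and $\Omega A + E = \hat Q\Rhat$ exactly, where $\hat Q$ is the exact orthogonal factor), I have $R\Rhat^{-1} = Q^T(\Omega A)(\Omega A + E)^\dagger\hat Q$, so $\Vert R\Rhat^{-1}\Vert_2 \le \Vert(\Omega A)(\Omega A+E)^\dagger\Vert_2 \le 1 + \Vert E(\Omega A + E)^\dagger\Vert_2 \le 1/(1-\Vert(\Omega A)^\dagger\Vert_2\Vert E\Vert_2)$, which is again bounded by (a piece of) $\beta$; symmetrically $\Vert\Rhat R^{-1}\Vert_2$ is bounded, giving \eqref{eq:ARhat_inv_norm_bound} and \eqref{eq:ARhat_inv_pinv_norm_bound}, and their product gives \eqref{eq:ARhat_inv_cond_bound}. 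Alternatively — and perhaps more cleanly — I would note $\Vert A\Rhat^{-1}\Vert_2 = \Vert(\Omega Q_A)^\dagger\,\Omega Q_A R_A\Rhat^{-1}\Vert_2$ and reuse the identity chain from Section~\ref{sec:sketching_background} with $\Rhat$ in place of $P$, but that requires $\Omega A$ full rank which is already assumed.

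The main obstacle I anticipate is bookkeeping rather than conceptual: getting the constants in $\beta$ exactly right, and in particular justifying the replacement of $\Vert\Yhat\Vert_2$ by $\Vert\Omega A\Vert_2$ and of $\kappa_2(\Yhat)$ by $\kappa_2(\Omega A)$ in the error bound for $E$ without circularity — one must first show $\Yhat$ is close to $\Omega A$ in the relevant relative sense (using \eqref{eq:assumption_u_s}), then feed that into the QR backward-error bound, then re-express everything relative to $\Omega A$. Care is also needed that assumption \eqref{eq:assumption_u_qr} is stated in terms of $\kappa_2(\Yhat)$ (the computed, accessible quantity) while the conclusion is in terms of $\kappa_2(\Omega A)$, so a short lemma relating the two is the natural first step. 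Everything after that is geometric-series estimates of the form $1/(1-t) \le 1 + 2t$ for $t \le 1/2$, which is exactly where the factor-of-$2$ and the clean product form of $\beta$ come from.
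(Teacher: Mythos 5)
Your strategy is sound and would yield valid (indeed slightly tighter) bounds, but it differs from the paper's proof in a structural way worth flagging, and it contains one misattribution that you should correct before writing it up. The paper does \emph{not} combine $\Delta_s$ and the Householder backward error into a single perturbation $E$ of $\Omega A$ and then apply pseudoinverse perturbation theory. Instead it introduces the intermediate exact factorization $\Yhat = Q_Y R_Y$ and writes $\Rhat = (I+\Gamma_1)R_Y$ and $R_Y = (I+\Gamma_2)R$ with $\Gamma_1,\Gamma_2$ upper triangular. Each $\Gamma_i$ is then bounded via a Cholesky-factor perturbation lemma: one forms $\Yhat^T\Yhat = R_Y^T R_Y$ and $(\Yhat+\Delta_H)^T(\Yhat+\Delta_H) = \Rhat^T\Rhat$, reduces to $I + R_Y^{-T}E_c R_Y^{-1} = (I+\Gamma_1)^T(I+\Gamma_1)$, and invokes \cite[Theorem 3.1]{edelman1995parlett}, which is where the $(2\log_2 n + 4)$ constant comes from. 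It does \emph{not} come from the column-wise-to-Frobenius conversion in the Householder QR backward error result as you suggest; Higham's Theorem 19.4 contributes only the $n^{1/2}\widetilde\gamma_{sn}^{(QR)}$ part. Your plan, which uses $\Vert(\Omega A+E)^\dagger\Vert_2 \le \Vert(\Omega A)^\dagger\Vert_2/(1-\Vert(\Omega A)^\dagger\Vert_2\Vert E\Vert_2)$ and the clean algebraic identity $R\Rhat^{-1} = Q^T(\Omega A)(\Omega A+E)^\dagger\hat Q$, bypasses the Edelman--Parlett lemma entirely and so would never produce that constant; you'd end up with a bound of the form $1/(1-\Vert(\Omega A)^\dagger\Vert_2\Vert E\Vert_2)$, which is stronger than the stated $\beta$ but arranged as a single-denominator geometric series rather than the paper's clean \emph{product} of a $u_{QR}$-term and a $u_s$-term. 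The product structure of $\beta$ is a direct consequence of the two-step decomposition $\Rhat = (I+\Gamma_1)(I+\Gamma_2)R$; with a combined $E$ you would get a sum, not a product, and would have to argue separately that it is dominated by the stated $\beta$. So: either adopt the paper's two-step route (and cite Edelman--Parlett where the log factor enters), or keep your combined approach but then state honestly that you obtain a different (and smaller) amplification factor which is bounded above by the $\beta$ in the theorem, and drop the incorrect claim about where $(2\log_2 n + 4)$ originates. The part of your plan for the $A\Rhat^{-1}$ bounds via $A\Rhat^{-1} = (AR^{-1})(R\Rhat^{-1})$ is correct and essentially parallels what the paper does via $\Rhat^{-1} \approx R^{-1}(I-\Gamma_2)(I-\Gamma_1)$; and you are right that a preliminary lemma relating $\kappa_2(\Yhat)$ to $\kappa_2(\Omega A)$ is needed and the paper handles this via the inequality $(1+\alpha_2)^2 \le 9$.
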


\begin{proof}
    We perform the analysis in two steps. First, we express $\Rhat$ via the R factor of the exact economic QR decomposition  
\[\Yhat = Q_Y R_Y. \]
Then, we express $R_Y$ via the R factor of the exact sketched matrix \[ \Omega A = QR. \] 
We can do this by writing
\begin{gather}
     \Rhat = (I + \Gamma_1) R_Y  \text{ and } \label{eq:Rhat_R_Y} \\
     R_Y =  (I + \Gamma_2) R, \label{eq:R_Y_R}
\end{gather}
where $I$ is an $n \times n$ identity matrix, and $\Gamma_1$ and $\Gamma_2$ are upper triangular, and bounding $\Vert \Gamma_1 \Vert_2 $ and $\Vert \Gamma_2 \Vert_2$.

We start with $\Vert \Gamma_1 \Vert_2$, which requires considering the finite precision error in computing the QR decomposition. Standard results \cite[Theorem 19.4]{high:ASNA2} show that the Householder QR decomposition of $\Yhat$ returns $\Rhat$ such that 
\begin{gather*}
    \Yhat + \Delta_H = \Bar{Q} \Rhat, \textrm{ where} \\
    \| (\Delta_H)_j \|_2 \leq \widetilde{\gamma}_{sn}^{(QR)} \| (\Yhat)_j \|_2, \quad j=1:n,
\end{gather*}
and $\Bar{Q} \in \mathbb{R}^{s \times n}$ has orthonormal columns. Using $ \| B \|^2_F = \sum_j \| B_j \|^2_2$ we obtain
\begin{equation}\label{eq:delta_H_bound}
    \| \Delta_H \|_F \leq \widetilde{\gamma}_{sn}^{(QR)}  \|\Yhat \|_F.
\end{equation}
We proceed by considering the Cholesky decompositions of $\Yhat^T \Yhat$ and $(\Yhat + \Delta_H)^T (\Yhat + \Delta_H)$, namely,
\begin{equation*}
    \Yhat^T \Yhat = R_Y^T R_Y
\end{equation*}
and 
\begin{align*}
    (\Yhat + \Delta_H)^T (\Yhat + \Delta_H) = & \, \Yhat^T \Yhat + \underbrace{\Yhat^T \Delta_H + \Delta_H^T \Yhat + \Delta_H^T \Delta_H}_{E_c}\\
    = & \,  R_Y^T R_Y + E_c \\
    =& \Rhat^T \Rhat.
\end{align*}
Then using \eqref{eq:Rhat_R_Y} we have
\begin{equation*}
    R_Y^T R_Y + E_c = \Rhat^T \Rhat = R_Y^T(I + \Gamma_1)^T (I + \Gamma_1)R_Y
\end{equation*}
and by multiplying $R_Y^{-T}$ on the left and $R_Y^{-1}$ on the right we obtain
\begin{equation*}
    I + R_Y^{-T} E_c R_Y^{-1} = (I + \Gamma_1)^T (I + \Gamma_1).
\end{equation*}
In \cite[Theorem 3.1]{edelman1995parlett}, it is shown that 
\begin{equation*}
    \Vert \Gamma_1 \Vert_2 \leq (2 \log_2n + 4) \Vert R_Y^{-T} E_c R_Y^{-1} \Vert_2.
\end{equation*}
We proceed bounding $\Vert R_Y^{-T} E_c R_Y^{-1} \Vert_2$ as
\begin{align*}
    \Vert R_Y^{-T} E_c R_Y^{-1} \Vert_2 = & \, \Vert Q_Y^T \Delta_H R_Y^{-1}  +  R_Y^{-T} \Delta_H^T Q_Y + R_Y^{-T} \Delta_H^T \Delta_H R_Y^{-1} \Vert_2 \\
    \leq & \, 2 \Vert \Delta_H R_Y^{-1} \Vert_2 + \Vert \Delta_H R_Y^{-1} \Vert_2^2 \\
    \leq & \, \widetilde{\gamma}_{sn}^{(QR)}  \|\Yhat \|_F \Vert R_Y^{-1} \Vert_2 + (\widetilde{\gamma}_{sn}^{(QR)})^2  \|\Yhat \|_F^2 \Vert R_Y^{-1} \Vert_2^2 \\
    \leq & \, n^{1/2} \widetilde{\gamma}_{sn}^{(QR)}  \kappa_2(\Yhat) + n (\widetilde{\gamma}_{sn}^{(QR)})^2  \kappa_2(\Yhat)^2
\end{align*}
and thus
\begin{equation}\label{eq:gamma1_bound}
     \Vert \Gamma_1 \Vert_2 \leq (2 \log_2n + 4) n^{1/2} \widetilde{\gamma}_{sn}^{(QR)}  \kappa_2(\Yhat)\left( 1 + n^{1/2} \widetilde{\gamma}_{sn}^{(QR)}  \kappa_2(\Yhat) \right) \eqqcolon \alpha_1.
\end{equation}
By considering the Cholesky decompositions of $(\Omega A)^T \Omega A$ and $(\Omega A + \Delta_s)^T(\Omega A + \Delta_s)$ and using the same argument as above, we obtain
\begin{align}
    \Vert \Gamma_2 \Vert_2 \leq & \, (2 \log_2n + 4)  \left( n^{1/2} \Vert R^{-1} \Vert_2 \Vert \Delta_s \Vert_2 + n\Vert R^{-1} \Vert_2^2 \Vert \Delta_s \Vert_2^2 \right) \nonumber \\
    = & \, (2 \log_2n + 4)  n^{1/2} \Vert (\Omega A)^{\dagger} \Vert_2 \Vert \Delta_s \Vert_2 \left(1 + n^{1/2} \Vert (\Omega A)^{\dagger} \Vert_2 \Vert \Delta_s \Vert_2 \right)\eqqcolon \alpha_2. \label{eq:gamma2_bound}
\end{align}
To obtain the bounds \eqref{eq:Rhat_inv_norm_bound}-\eqref{eq:ARhat_inv_cond_bound}, we have to consider $\Rhat^{-1}$ and $R_Y^{-1}$. We do this using the first order approximations
\begin{gather*}
    \Rhat^{-1} = \left( (I + \Gamma_1) R_Y \right)^{-1} \approx R_Y^{-1} (I - \Gamma_1)\\
     R_Y^{-1} = \left( (I + \Gamma_2) R \right)^{-1} \approx R^{-1} (I - \Gamma_2),
\end{gather*}
that are valid under assumptions \eqref{eq:assumption_u_qr} and \eqref{eq:assumption_u_s}. From the above approximations, \eqref{eq:Rhat_R_Y}, and \eqref{eq:R_Y_R}, we have
\begin{gather*}
    \Rhat = (I + \Gamma_1) (I + \Gamma_2) R \quad \text{ and} \\
    \Rhat^{-1} \approx R^{-1} (I - \Gamma_2) (I - \Gamma_1).  
\end{gather*}
Note that $(A\Rhat^{-1})^{\dagger} = \Rhat A^{\dagger}$, because $A$ is full rank and $\Rhat$ is invertible.
Taking the norms, using
\begin{gather*}
    \Vert (I + \Gamma_1) (I + \Gamma_2) \Vert_2 \leq (1 + \Vert \Gamma_1 \Vert_2 ) (1 + \Vert \Gamma_2 \Vert_2) \leq (1+\alpha_1)(1+\alpha_2), \\
    \Vert (I - \Gamma_2) (I - \Gamma_1) \Vert_2 \leq (1 + \Vert \Gamma_1 \Vert_2 ) (1 + \Vert \Gamma_2 \Vert_2) \leq (1+\alpha_1)(1+\alpha_2),
\end{gather*}
noting that under the assumptions \eqref{eq:assumption_u_qr} and \eqref{eq:assumption_u_s}
\begin{align*}
\alpha_1 & \leq 2 (2 \log_2n + 4) n^{1/2} \widetilde{\gamma}_{sn}^{(QR)}  \kappa_2(\Yhat), \\
    \alpha_2 & \leq 2(2 \log_2n + 4)  n^{1/2} \Vert (\Omega A)^{\dagger} \Vert_2 \Vert \Delta_s \Vert_2,
\end{align*}
and substituting $\kappa_2(\Yhat)$ in the bound for $\alpha_1$ by
\begin{equation*}
    \kappa_2(\Yhat) = \kappa_2(R_Y) \leq \kappa_2(R) (1 + \Vert \Gamma_2 \Vert_2)^2 \leq \kappa_2(R) (1 +  \alpha_2)^2, 
\end{equation*}
where using \eqref{eq:assumption_u_s} we have $ (1 +  \alpha_2)^2 \leq 9$, and using
\begin{equation*}
    \Vert R \Vert_2 = \Vert \Omega A \Vert_2 \quad \text{and} \quad  \Vert R^{-1} \Vert_2 = \Vert (\Omega A)^{\dagger} \Vert_2
\end{equation*}
gives the required results.
\end{proof}

\subsection{Comments}\label{sec:comments_on_theorem}
We now comment on the assumptions and results of Theorem~\ref{th:finite_recision_error_R}. In \eqref{eq:assumption_u_qr}, we assume that the computed sketched matrix is not severely ill-conditioned in the precision which is used to compute its QR decomposition; this is a standard assumption, which also guides against using $u_{QR} \gg u_s$. 
In \eqref{eq:assumption_u_s} we assume that the sketching error is small enough to neutralise $\Vert (\Omega A)^{\dagger} \Vert_2 $. Note that both quantities here depend on the type of sketching used. The results show that when the sketching matrix and precisions are chosen carefully, the norms and condition numbers of $\Rhat$ and $A \Rhat^{-1}$ are close to the norms and condition numbers when using the exact $R$ factor of $\Omega A$. 

We can specify the results when the sketch is computed as a matrix-matrix product and thus 
\begin{equation*}
    \Vert \Delta_s \Vert_2 \leq u_s \Vert A \Vert_2  \Vert \Omega \Vert_2 + m^{1/2} \gamma_m^{(s)} \Vert A \Vert_2  \Vert \Omega \Vert_2,
\end{equation*}
where the first term is due to casting $A$ into $u_s$ and the second term comes from the matrix-matrix multiplication. Combining this with  \eqref{eq:assumption_u_s} gives condition
\begin{equation*}
     (2 \log_2n + 4) n^{1/2} m^{1/2} \gamma_{m+1}^{(s)} \frac{\Vert A \Vert_2  \Vert \Omega \Vert_2}{\Vert \Omega A \Vert_2} \kappa_2(\Omega A)  < 1. 
\end{equation*}
We thus require the exact sketched matrix $\Omega A$ to be not too ill-conditioned in the sketching precision $u_s$. The bound for $\beta$ is then
\begin{equation*}
    \beta \leq  1 + 2(2 \log_2n + 4) n^{1/2} \left( 9\widetilde{\gamma}_{sn}^{(QR)} + m^{1/2} \gamma_{m+1}^{(s)} \frac{\Vert A \Vert_2  \Vert \Omega \Vert_2}{\Vert \Omega A \Vert_2} \right)  \kappa_2(\Omega A) + \mathcal{O}(u_{QR}u_{s}).
\end{equation*}
A mixed precision implementation of Algorithm~\ref{alg:generating_R} in \cite{georgiou2023mixed} uses $u_{QR} \leq u_s$, that is, the QR decomposition is computed in precision higher than in the sketching step. Note that our results do not suggest a higher quality approximation in this setting.

We note that the term $\beta$ can be seen as an ``amplification'' factor which determines the ratio between the computed and exact quantities.  
In the case that $u_s=u_{QR}=0$ (i.e., we compute in exact arithmetic), then $\beta=1$ and thus the computed quantities are the same as the exact quantities.

If $u_s$ or $u_{QR}$ is set to a low precision with narrow range, for example, IEEE half precision, then scaling may be needed to avoid underflow and overflow; see \cite{higham2019squeezing} for a deeper discussion. A one-sided diagonal scaling strategy is presented in \cite[lines 1-3 of Algorithm 3.1]{carson2017new}. Here one constructs a diagonal matrix $S$ that contains the reciprocals of the modulus largest elements of each column of $A$ with a positive sign and $AS$ is computed, that is each column of $A$ is divided by its modulus largest value to avoid overflow. Then $AS$ is multiplied by a positive parameter to increase the values within the range of the low precision and avoid underflow. $AS$ can be used instead of $A$ in Algorithm~\ref{alg:generating_R} and the preconditioner is then set to $RS^{-1}$.

\subsection{Previous work}
The errors in uniform precision versions of Algorithm~\ref{alg:generating_R} have been analysed in \cite{meier2024sketch,epperly2024fast}, and \cite{higgins2023analysis}. The first manuscript provides deterministic bounds for $\kappa_2(\Rhat)$ and $\kappa_2(A\Rhat^{-1})$ in terms of, respectively, $\kappa_2(\Omega Q_A) \kappa_2(A)$ and $\kappa_2(\Omega Q_A)$, where $A = Q_A R_A$ is a truncated QR factorization. The second one provides bounds in terms of $\kappa_2(A)$ and the distortion of the subspace embedding. The final listed work gives probabilistic bounds for $\Vert \Rhat^{-1} \Vert_2$ and $\Vert A \Rhat^{-1} \Vert_2$ when using multisketching.

\subsection{Sketching ill-conditioned matrices in low precision}\label{sec:sketching_regularized}
If we choose $\Omega$ to be a subspace embedding, then $\kappa_2(\Omega A)$ is close to $\kappa_2(A)$ and assumption \eqref{eq:assumption_u_s} essentially limits the applicability of the analysis to cases where $\kappa_2(A) < u_s^{-1}$. Thus, $u_s$ can be set to half precision only if $\kappa_2(A) < 2^{11}=2048$. We can alternatively use Theorem~\ref{th:finite_recision_error_R} with $A$ replaced by its full-rank low precision version $A_s$ and obtain the result in the following corollary.
\begin{cor}
Let $A \in \mathbb{R}^{m \times n}$ be full rank and $A_s$ denote $A$ cast to a precision $u_s$, such that 
\begin{gather*}
    A_s = A + E, \quad \textrm{where } \Vert E \Vert_2 \leq \sqrt{n} u_s \Vert A \Vert_2, \quad \textrm{and} \\
    \Omega A_s = Q_s R_s
\end{gather*}
is the economic QR decomposition. We assume that $A_s$ is full-rank and assume that the assumptions of Theorem~\ref{th:finite_recision_error_R} hold with $A$ replaced by $A_s$, and denote the resulting $\beta$ as $\beta_s$. 
Then \eqref{eq:Rhat_norm_bound} - \eqref{eq:ARhat_inv_cond_bound} can be written as 
 \begin{align*}
        \Vert \Rhat \Vert_2 & \leq \beta_s \Vert \Omega A_s \Vert_2, \\
        \Vert \Rhat^{-1} \Vert_2 & \leq \beta_s \Vert (\Omega A_s)^{\dagger} \Vert_2, \\
        \kappa_2(\Rhat) & \leq \beta_s^2 \kappa_2(\Omega A_s),\\
        \Vert A \Rhat^{-1} \Vert_2 & \leq \beta_s \Vert AR_s^{-1} \Vert_2, \\
         \Vert (A \Rhat^{-1})^{\dagger} \Vert_2 & \leq \beta_s \Vert (AR_s^{-1})^{\dagger} \Vert_2,\\
         \kappa_2(A \Rhat^{-1}) & \leq \beta_s^2 \kappa_2( A R_s^{-1}).
    \end{align*}
\end{cor}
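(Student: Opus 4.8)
The proof is essentially a black-box application of Theorem~\ref{th:finite_recision_error_R}, and the plan is to make this reduction precise. The key observation is that Algorithm~\ref{alg:generating_R} applied to $A$ in precision $u_s$ never actually uses the exact matrix $A$: line~\ref{step:sketch} first casts $A$ to precision $u_s$, producing $A_s = A + E$ with $\Vert E \Vert_2 \leq \sqrt{n}\, u_s \Vert A \Vert_2$, and then forms the sketch. Hence the computed output $\Yhat$ of the algorithm satisfies $\Yhat = \Omega A_s + \Delta_s'$, where $\Delta_s'$ now accounts only for the rounding errors in forming the matrix(-operator) product $\Omega A_s$, with $A_s$ already exactly representable in $u_s$. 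In other words, the algorithm run on $A$ is bit-for-bit identical to the algorithm run on the (hypothetical, exactly-$u_s$-representable) input $A_s$. This is the step where a little care is needed: one must confirm that the error term $\Delta_s$ in the statement of Theorem~\ref{th:finite_recision_error_R} is by construction allowed to absorb the cast, so that ``$\Delta_s$ for input $A_s$'' and ``$\Delta_s$ for input $A$'' are consistent — i.e. the hypotheses of the theorem for input $A_s$ are exactly the hypotheses we are assuming in the corollary.

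Once this identification is in place, I would simply invoke Theorem~\ref{th:finite_recision_error_R} verbatim with $A$ replaced everywhere by $A_s$. Since $\Omega A_s = Q_s R_s$ is the economic QR factorization of $\Omega A_s$, the role of ``$R$'' in the theorem is played by $R_s$, and the role of ``$\beta$'' is played by $\beta_s$ (defined exactly as $\beta$ but with $\Omega A$ replaced by $\Omega A_s$ and $\Delta_s$ by the rounding term of $\Omega A_s$). The six inequalities \eqref{eq:Rhat_norm_bound}--\eqref{eq:ARhat_inv_cond_bound} then read, line by line, as the six claimed inequalities: $\Vert \Rhat \Vert_2 \leq \beta_s \Vert \Omega A_s \Vert_2$, $\Vert \Rhat^{-1} \Vert_2 \leq \beta_s \Vert (\Omega A_s)^{\dagger}\Vert_2$, $\kappa_2(\Rhat) \leq \beta_s^2 \kappa_2(\Omega A_s)$, and likewise for the three preconditioned quantities with $AR_s^{-1}$ in place of $AR^{-1}$.

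The one point worth spelling out, rather than asserting, is why the $A\Rhat^{-1}$-type bounds \eqref{eq:ARhat_inv_norm_bound}--\eqref{eq:ARhat_inv_cond_bound} come out with $A$ (not $A_s$) on the left but $AR_s^{-1}$ (not $A_sR_s^{-1}$) on the right. In the proof of Theorem~\ref{th:finite_recision_error_R}, the matrix that is factored and whose QR $R$-factor enters is the input to the algorithm — here $A_s$ — so the exact reference preconditioned matrix is $A_sR_s^{-1}$; however, $\Rhat$ is still the computed factor, and the quantity one ultimately cares about (and which the corollary reports) is $A\Rhat^{-1}$, the true matrix preconditioned by the computed factor. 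The cleanest route is: apply the theorem to get $\Vert A_s\Rhat^{-1}\Vert_2 \leq \beta_s \Vert A_sR_s^{-1}\Vert_2$ and $\Vert(A_s\Rhat^{-1})^{\dagger}\Vert_2 \leq \beta_s\Vert(A_sR_s^{-1})^{\dagger}\Vert_2$ exactly as stated, and then observe that in the corollary we have simply chosen to state the bounds in this $A_s$-consistent form on \emph{both} sides — i.e. the left-hand sides of the displayed inequalities in the corollary should be read with $A_s$, matching the right-hand sides, OR, if one insists on $A$ on the left, pass from $A_s$ to $A$ via $A = A_s - E$ at the cost of an extra $\mathcal{O}(\sqrt{n}\,u_s\kappa)$ factor folded into $\beta_s$. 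I expect the main (mild) obstacle to be precisely this bookkeeping of which matrix sits on each side; there is no hard analysis, the whole corollary being a change-of-variable in an already-proved theorem.
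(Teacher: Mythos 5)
Your reduction to Theorem~\ref{th:finite_recision_error_R} with $A\mapsto A_s$ is the right idea for the first three bounds, and your remark that the cast error is absorbed into $\Delta_s$ in a way that makes the two invocations of the theorem consistent is correct. But your handling of the last three bounds contains a genuine gap: both of the alternatives you offer are wrong. The corollary really does have $A$ (not $A_s$) on both the left and the right of the last three inequalities, so ``read the LHS with $A_s$'' mischaracterizes the statement; and the fallback of folding an $\mathcal{O}(\sqrt{n}\,u_s\kappa)$ factor into $\beta_s$ would destroy the bound precisely in the regime the corollary is meant for, namely $\kappa_2(A)>u_s^{-1}$, where that factor is $\gg 1$.

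The correct observation is that no price is paid at all. The theorem's proof never bounds $\Vert A\Rhat^{-1}\Vert_2$ by relating $A$ to anything; it establishes the matrix relations $\Rhat = (I+\Gamma_1)(I+\Gamma_2)R$ and $\Rhat^{-1}\approx R^{-1}(I-\Gamma_2)(I-\Gamma_1)$, which involve only the $R$-factors. With $A\mapsto A_s$, these become $\Rhat=(I+\Gamma_1)(I+\Gamma_2)R_s$ and $\Rhat^{-1}\approx R_s^{-1}(I-\Gamma_2)(I-\Gamma_1)$, and these identities may be left-multiplied by \emph{any} fixed matrix. Taking that matrix to be the original $A$ gives $A\Rhat^{-1}\approx AR_s^{-1}(I-\Gamma_2)(I-\Gamma_1)$, hence $\Vert A\Rhat^{-1}\Vert_2\le\beta_s\Vert AR_s^{-1}\Vert_2$ directly. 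For the pseudoinverse, use $(A\Rhat^{-1})^{\dagger}=\Rhat A^{\dagger}$ and $(AR_s^{-1})^{\dagger}=R_s A^{\dagger}$ (both valid since $A$ is full column rank and $\Rhat$, $R_s$ are invertible, exactly the fact noted in the theorem's proof), giving $(A\Rhat^{-1})^{\dagger}=(I+\Gamma_1)(I+\Gamma_2)(AR_s^{-1})^{\dagger}$ and the remaining bounds. So the ``bookkeeping'' you flagged resolves without any extra factor and without reinterpreting the statement; this is the one missing step in your argument.
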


$\Rhat$ is thus close to $R_s$. The relative perturbation to the largest singular values of $A$ coming from casting it to the lower precision $u_s$ is expected to be small and we thus expect $\Vert \Omega A_s \Vert_2 \approx \Vert \Omega A \Vert_2$. The small singular values of a tall and skinny $A_s$ can, however, be significantly larger than the small singular values of $A$ when $\kappa_2(A) > u_s^{-1}$, that is, the low precision has a \textit{regularizing} effect on $A$; see, e.g., \cite{boutsikas2023small} for deterministic results and \cite{dexter2024stochastic} for the stochastic rounding case. A large increase in the smallest singular values would give $\Vert A_s^{\dagger} \Vert_2 \ll \Vert A^{\dagger} \Vert_2$ and thus $\kappa_2(A_s) \ll \kappa_2(A)$. This would allow the analysis to be applied to a wider range of problems, namely, when $\kappa_2(A_s) < u_s^{-1}$ and casting to lower precision preserves the rank. 

In order to use the bounds involving $A_s$ in the analysis of IR, we need to upper bound them by terms involving $A$. We can bound $\Vert \Omega A_s \Vert_2$ as
\begin{equation*}
    \Vert \Omega A_s \Vert_2 = \Vert \Omega (A + E) \Vert_2 \leq \Vert \Omega A \Vert_2 + \sqrt{n} u_s \Vert \Omega \Vert_2 \Vert A \Vert_2,
\end{equation*}
and using the assumption that $A$ is full rank and thus has linearly independent columns while $R_s$ has linearly independent rows we bound $\Vert (A R_s^{-1})^{\dagger} \Vert_2$ as
\begin{align*}
    \Vert (AR_s^{-1})^{\dagger} \Vert_2 = & \, \Vert R_s A^{\dagger} \Vert_2 \\
                                        = & \, \Vert Q_s^T \Omega (A+E) A^{\dagger} \Vert_2 \\ 
                                        \leq & \, \Vert Q_s^T \Omega A A^{\dagger} \Vert_2 + \Vert  Q_s^T \Omega E A^{\dagger} \Vert_2 \\
                                         \leq & \, \Vert \Omega \Vert_2 + \Vert \Omega E  A^{\dagger} \Vert_2 \\
                                          \leq & \, \left( 1 + \sqrt{n} u_s \Vert A \Vert_2 \Vert  A^{\dagger} \Vert_2 \right) \Vert \Omega \Vert_2 \\
                                          = & \,\left( 1 + \sqrt{n} u_s \kappa_2(A) \right) \Vert \Omega \Vert_2,
\end{align*}     
which we observe to be descriptive in our numerical experiments. Obtaining useful bounds for $\Vert (\Omega A_s)^{\dagger} \Vert_2$ and $\Vert A R_s^{-1} \Vert_2$ however proves challenging and our numerical experiments not reported here suggest that it depends on the ratio $n/m$. 
We illustrate the behavior of $\Vert (A R_s^{-1})^{\dagger} \Vert_2$ in Figure~\ref{fig:psinverse_norm_casted_preconditioned} with a small MATLAB example. In order to focus on the effect of casting to lower precision, no sketching is used, that is, $\Omega$ is set to an identity matrix. We cast $A$ to single and half precisions, compute the R factors of the cast matrices in double precision and use these to precondition $A$. 

\begin{figure}
\centering
        \includegraphics[width=0.7\textwidth]{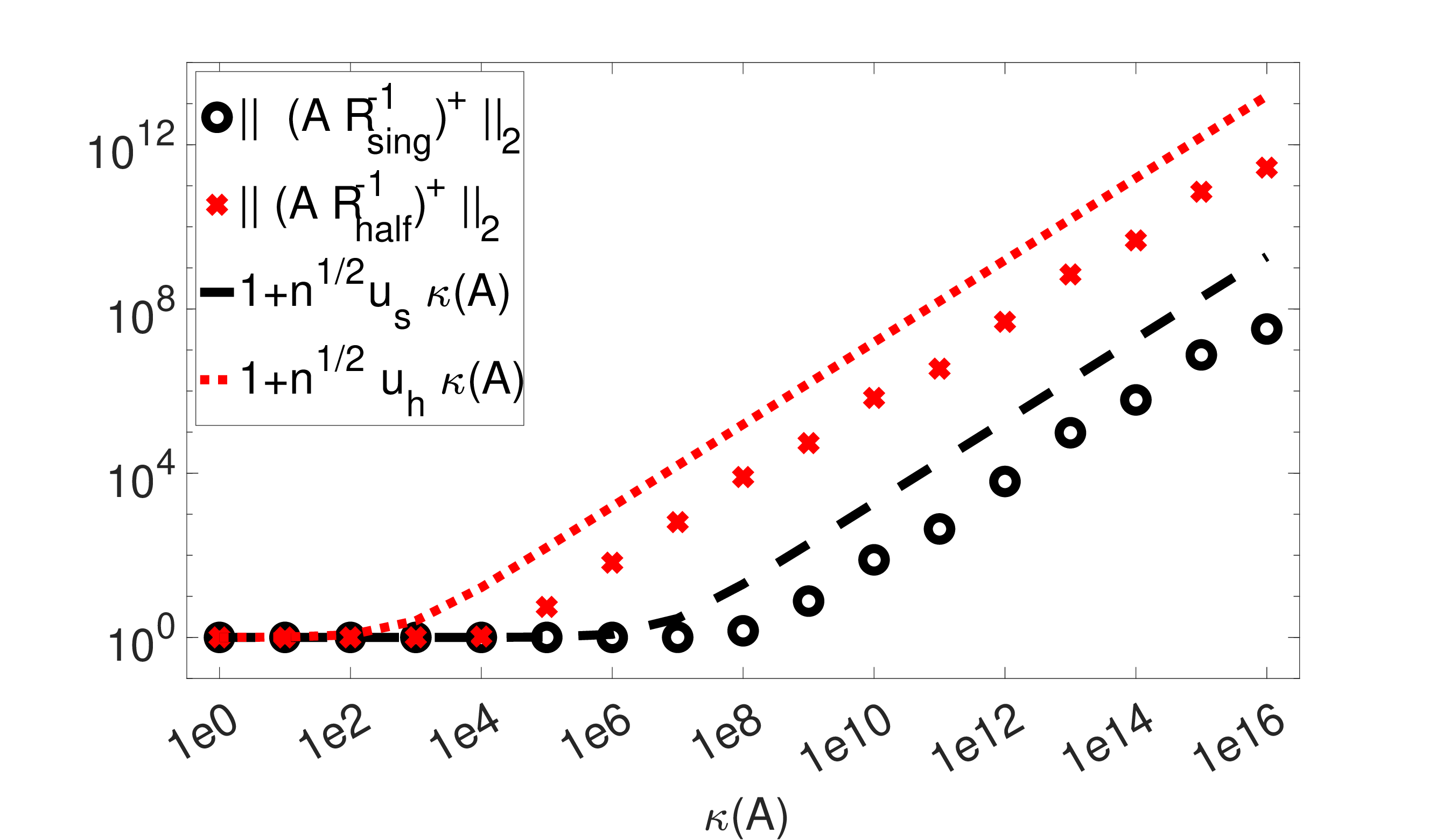}
    \caption{$\Vert (AR_{sing}^{-1})^{\dagger} \Vert_2$ and $\Vert (AR_{half}^{-1})^{\dagger} \Vert_2$, when $A$ is generated in double precision as $gallery('randsvd',[400,10],\kappa(A),3)$, and factors $R_{sing}$ and $R_{half}$ are obtained by casting $A$ to single and half precisions, respectively, and computing economic QR decompositions of the lower precision matrices. }
\label{fig:psinverse_norm_casted_preconditioned}
\end{figure}

\section{Sketch-and-precondition FGMRES-LSIR}\label{sec:lsir}
We now consider how the \newline randomized preconditioner can be used in the LSIR setting and provide theoretical convergence guarantees. Recall that in previous work \cite{carson2020three} it was shown that using preconditioned GMRES in LSIR allows solving more ill-conditioned problems than when using Bj{\"o}rck's approach that employs the QR factorization of $A$ \cite{bjorck1967solving}. However the theoretical convergence guarantees in \cite{carson2020three} hold only for a particular left-preconditioner using the full QR factors, which can be expensive to compute and apply in practice. We extend this work to account for a preconditioner that employs the randomized $R$-factor only (i.e., the $Q$ factor is not needed) and is applied as a split-preconditioner in FGMRES instead of a left-preconditioner in GMRES.

We describe this LSIR procedure
in Algorithm~\ref{alg:augmented_approach}. LSQR is initialized with the so-called sketch-and-solve solution $x = R^{-1}(Q^T \Omega b)$, where $\Omega A = QR$; such an initialization for a least-squares solver was originally proposed in \cite{rokhlin2008tygert} and it has been observed that it can significantly improve the accuracy of the final solution \cite{meier2024sketch,epperly2024fast}. LSQR is run in the working precision with unit roundoff $u$, which is also used as the working precision in IR. In the IR loop, the residuals of the augmented system are computed in precision with unit roundoff $u_r$ and we use this precision for a triangular solve with $R$ to obtain the preconditioned right-hand side. We employ a mixed precision FGMRES variant, where the applications of $M_L^{-1}$, $M_R^{-1}$, and $\Atld$ to a vector are computed in precisions with unit roundoffs $u_L$, $u_R$, and $u_A$, respectively, and other computations are performed in the working precision $u$ \cite{carson2024stability}; $M_L^{-1}$, $M_R^{-1}$, and $\Atld$ are as defined in \eqref{eq:ir_system_split_prec}.

\begin{algorithm}
\caption{Augmented system LSIR with randomized preconditioning}\label{alg:augmented_approach}
\algorithmicrequire  $A \in \mathbb{R}^{m \times n}$, $b \in \mathbb{R}^m$, $R$ computed via Algorithm~\ref{alg:generating_R}, $x^{s} = R^{-1}(Q^T \Omega b)$, IR precisions $u_r$ and $u$ where $u_r \leq u$, FGMRES precisions $u_A$, $u_L$, $u_R$  \\
\algorithmicensure approximate solution $x$
\begin{algorithmic}[1]
\State Solve $ x_0 = arg \min_x \Vert b-Ax\Vert_2$ via LSQR initialised with $x^{s}$ and right-preconditioned with $R$ \Comment{$u$ }
\State Compute $r_0 = b - A x_0$ \Comment{$u$ }
\State $i=0$
\WHILE{not converged} \label{irstep:while_start}
\State Compute    $
        \begin{bmatrix}
 f_i \\g_i
\end{bmatrix}
= \begin{bmatrix}
 b \\0
\end{bmatrix}
-
\begin{bmatrix}
 I & A\\ A^T & 0
\end{bmatrix}
\begin{bmatrix}
r_i \\  x_i
\end{bmatrix}
$ \Comment{$u_r$} 
\State Compute $h_i = R^{-T} g_i$ via triangular solve  \Comment{$u_r$}  \label{irstep:res_comp2}
\State Solve via split-preconditioned FGMRES 
\begin{equation*}
\begin{bmatrix}
 I & 0 \\ 0 & R^{-T}
\end{bmatrix}
\begin{bmatrix}
 I & A \\ A^T & 0
\end{bmatrix}
\begin{bmatrix}
 I & 0 \\ 0 & R^{-1}
\end{bmatrix}
\begin{bmatrix}
\delta r_i \\ \delta z_i
\end{bmatrix}
=
\begin{bmatrix}
 f_i \\ h_i
\end{bmatrix},
\end{equation*} 
where 
\begin{equation*}
    \begin{bmatrix}
 I & 0 \\ 0 & R
\end{bmatrix} \begin{bmatrix}
\delta r_i \\ \delta x_i
\end{bmatrix}
=\begin{bmatrix}
\delta r_i \\ \delta z_i
\end{bmatrix}
\end{equation*} \Comment{$u$, $u_A$, $u_L$, $u_R$} \label{irstep:fgmres_solve}
\State Update $
\begin{bmatrix}
r_{i+1} \\  x_{i+1} 
\end{bmatrix} = 
\begin{bmatrix}
r_i \\  x_i 
\end{bmatrix}
+ 
\begin{bmatrix}
\delta r_i \\ \delta x_i 
\end{bmatrix} $ \Comment{$u$} \label{irstep:update}
\State $i=i+1$
\ENDWHILE \label{irstep:while_end}
\end{algorithmic}
\end{algorithm}

We further explore theoretical convergence guarantees for this LSIR approach. This requires bounds for the condition number of the preconditioned system, which we obtain in the next subsection. The following notation is used.
\begin{gather}\label{eq:IR_notation}
    \btld = \begin{bmatrix}
        b \\ 0
    \end{bmatrix}, \
    d = \begin{bmatrix}
        r \\ x
    \end{bmatrix}, \
    d_i = \begin{bmatrix}
        r_i \\ x_i
    \end{bmatrix}, \
     \ddelta_i = \begin{bmatrix}
        \rdelta_i \\ \xdelta_i
    \end{bmatrix}, \
    y_i = \begin{bmatrix}
        \rdelta_i \\ \delta z_i
    \end{bmatrix}, \
    w_i = \begin{bmatrix}
        f_i \\ g_i
    \end{bmatrix}, \ 
    s_i = \begin{bmatrix}
        f_i \\ h_i
    \end{bmatrix}.
\end{gather}

\subsection{Condition number of the preconditioned augmented matrix}
We consider the preconditioned coefficient matrix in \eqref{eq:ir_system_split_prec} and obtain two bounds for it: one in terms of $\Vert A \Rhat^{-1} \Vert_2$ and $\Vert ( A\Rhat^{-1})^{\dagger}\Vert_2$, and another in terms of $\kappa(A\Rhat^{-1} )$.

We use Bj\"{o}rck's approach \cite{bjorck1967solving} for bounding $\kappa_2(\Atld)$ to bound \linebreak $\kappa_2 (M_L^{-1} \Atld M_R^{-1})$. Consider the following scaled coefficient matrix
\begin{equation*}
    M_L^{-1} \Atld_{\alpha} M_R^{-1} \coloneqq \begin{bmatrix}
        \alpha I & A \Rhat^{-1} \\ \Rhat^{-T} A^T & 0
    \end{bmatrix},
\end{equation*}
where $\alpha > 0$. The condition number of the preconditioned matrix is
\begin{equation*}
    \kappa(M_L^{-1} \Atld_{\alpha} M_R^{-1}) = \frac{\alpha + \sqrt{\alpha^2 + 4 \sigma_{max}(A\Rhat^{-1})^2}}{\min \{2, \sqrt{\alpha^2 + 4 \sigma_{min}(A\Rhat^{-1})^2} - \alpha \} }.
\end{equation*}
If no scaling is used, that is, $\alpha=1$, then we have
\begin{align*}
    \kappa(M_L^{-1} \Atld_{\alpha} M_R^{-1}) & = \frac{1 + \sqrt{1 + 4 \sigma_{max}(A\Rhat^{-1})^2}}{\min \{2, \sqrt{1 + 4 \sigma_{min}(A\Rhat^{-1})^2} - 1 \} } \\
    & \leq \frac{2 + 2 \Vert A\Rhat^{-1}\Vert_2}{\min \{2, \sqrt{1 + 4 / \Vert ( A\Rhat^{-1})^{\dagger}\Vert^2} - 1 \} }, 
\end{align*}
where we use $\sqrt{a^2 + b^2} \leq a +b$ when $a,b>0$. Thus
\begin{equation}\label{eq:precond_augmented_condition_bound_no_scaling}
\kappa(M_L^{-1} \Atld_{\alpha} M_R^{-1}) \leq
    \begin{cases}
        1 + \Vert A\Rhat^{-1}\Vert_2,  & \text{ if } \Vert ( A\Rhat^{-1})^{\dagger}\Vert_2 \leq 1/\sqrt{2}, \\
        \frac{2 + 2 \Vert A\Rhat^{-1}\Vert_2}{\sqrt{1 + 4 / \Vert ( A\Rhat^{-1})^{\dagger}\Vert_2^2} - 1  } & \text{otherwise. } 
    \end{cases}
\end{equation}
The optimal scaling by $\alpha = 2^{-1/2} \sigma_{min}(A \Rhat^{-1})$ gives 
\begin{equation}\label{eq:L2cond_Bjorks}
    \kappa(M_L^{-1} \Atld_{\alpha} M_R^{-1}) \leq 2 \kappa(A \Rhat^{-1}).
\end{equation}
This scaling is, however, expensive to compute. 
If there is no scaling and \linebreak 
$\Vert ( A\Rhat^{-1})^{\dagger}\Vert_2 \gg 1$, then $\kappa(M_L^{-1} \Atld_{\alpha} M_R^{-1})$ can be close to $\kappa(A\Rhat^{-1})^2$. Note that when $\Rhat$ is computed via Algorithm~\ref{alg:generating_R} and assumptions of Theorem~\ref{th:finite_recision_error_R} hold with $\Omega$ chosen as a subspace embedding, we do \textit{not} expect $\Vert ( A\Rhat^{-1})^{\dagger}\Vert_2 \gg 1$ to hold. In this case we also expect $\kappa(A\Rhat^{-1})$ to be small. If however $\kappa_2(A) > u_s$, then $\Vert ( A\Rhat^{-1})^{\dagger}\Vert_2$ and as a result $\kappa(A \Rhat^{-1})$ can grow substantially as we have discussed in Section~\ref{sec:sketching_regularized}. Then our bounds above show that $\kappa(M_L^{-1} \Atld_{\alpha} M_R^{-1})$ can be ill-conditioned. 

We thus have two bounds, where one depends on the norm of the preconditioned least-squares coefficient matrix and its pseudoinverse ($\Atld$ is not scaled), and the other on its condition number ($\Atld$ is scaled). We consider a simple numerical example and show $\kappa(M_L^{-1} \Atld M_R^{-1})$ and bounds \eqref{eq:precond_augmented_condition_bound_no_scaling} and \eqref{eq:L2cond_Bjorks} in Figure~\ref{fig:cond_no_augmented_bounds}. Note that if $\kappa_2(A) < u_s^{-1}$ then $\kappa(M_L^{-1} \Atld M_R^{-1})$ stays close to \eqref{eq:L2cond_Bjorks} even without scaling and \eqref{eq:precond_augmented_condition_bound_no_scaling} is a tight bound; if the condition is violated, the perturbation due to sketching in low precision results in a poor preconditioner and thus scaling becomes important and $\kappa(M_L^{-1} \Atld M_R^{-1})$ grows large. Note that in this example with $u_s$ set to single and $\kappa_2(A)=10^{16}$ we cannot compute \eqref{eq:precond_augmented_condition_bound_no_scaling} in double precision, because $\sqrt{1 + 4 / \Vert ( A\Rhat^{-1})^{\dagger}\Vert_2^2}$ is evaluated as 1. 

\begin{figure}
    \centering
    \begin{subfigure}[t]{0.48\linewidth}
        \includegraphics[width=\textwidth]{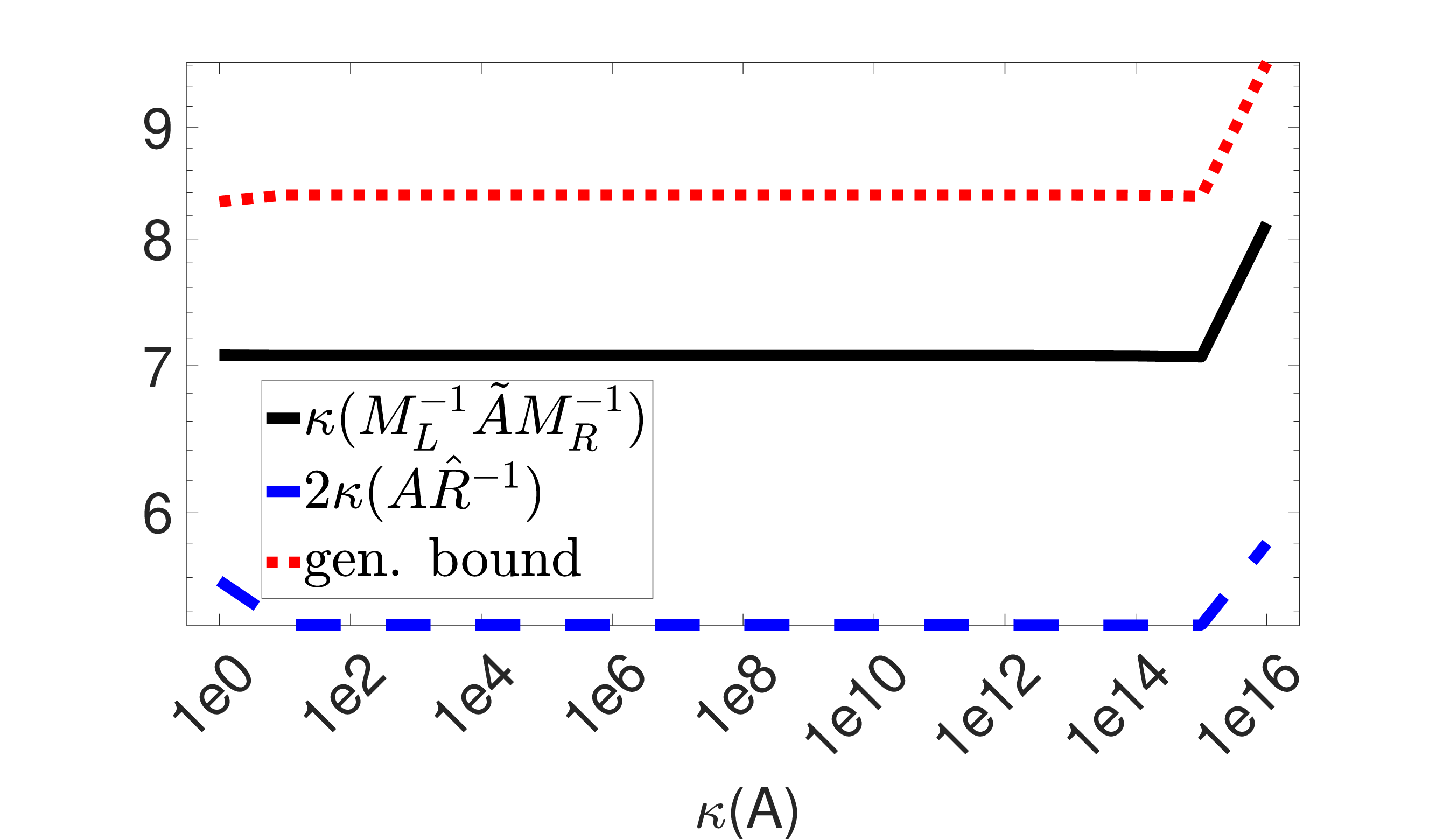}
    \end{subfigure}
    \begin{subfigure}[t]{0.48\linewidth}
        \includegraphics[width=\textwidth]{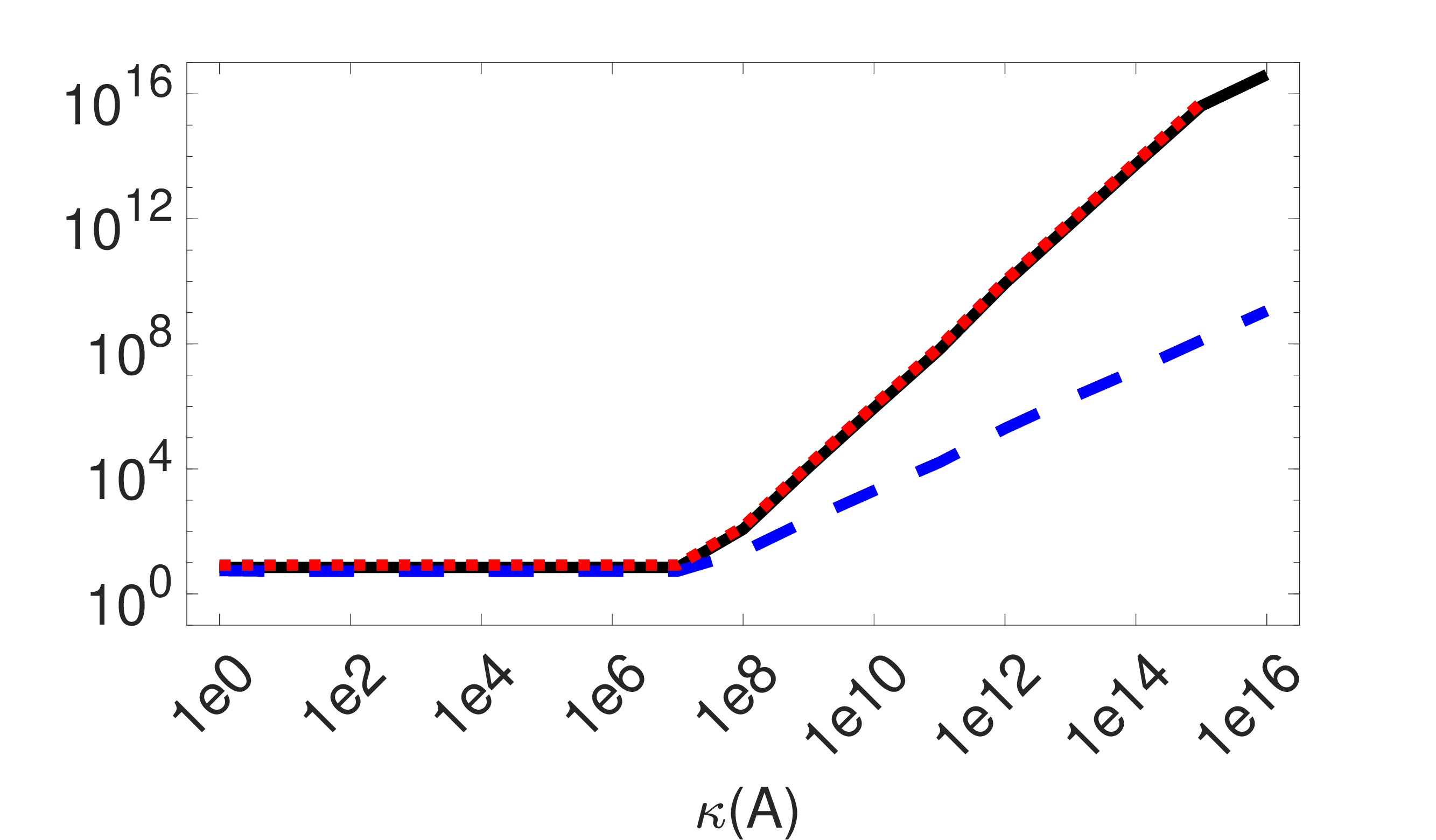}
    \end{subfigure}
    \caption{Condition number of the preconditioned augmented system without scaling and bounds \eqref{eq:precond_augmented_condition_bound_no_scaling} (gen. bounds) and \eqref{eq:L2cond_Bjorks} when $u_s$ is set to double (left panel) and single (right panel); $u_{QR}$ is set to double. $A$ is generated as in Figure~\ref{fig:psinverse_norm_casted_preconditioned} and $\Omega = \frac{1}{\sqrt{4n}}G$, where $G$ is a $4n \times m$ random matrix with Gaussian entries.}
    \label{fig:cond_no_augmented_bounds}
\end{figure}

\subsection{IR convergence guarantees} 
The aim of the preconditioner is to ensure the convergence of FGMRES-IR. We provide the analysis first and summarize it in a theorem at the end of the section. Carson and Higham \cite{carson2018accelerating} proved that IR for solving linear systems of equations converges under some conditions on the computed solution in the refinement steps. Namely, the forward error $\frac{\Vert d - \dhat_i \Vert}{\Vert d \Vert}$ is guaranteed to converge to the limiting accuracy $4pu_r\textrm{cond}(\Atld,d)+ u$, where $p$ is the maximum number of nonzeros per row of $\begin{pmatrix} \Atld & d \end{pmatrix}$ and $\textrm{cond}(\Atld,d) = \frac{\Vert \vert \Atld^{-1} \vert \vert \Atld \vert \vert d \vert \Vert}{\Vert d \Vert }$, if the computed updates $\ddeltah_i = \begin{bmatrix} \delta \rhat_{i}^T &  \delta \xhat_{i}^T  \end{bmatrix}^T$ satisfy a bound on the relative normwise forward error
\begin{equation}\label{eq:forward_error_IRcondition}
    \frac{\Vert  \ddelta_i - \ddeltah_i \Vert }{\Vert \ddelta_i \Vert} = u_g \Vert E_i \Vert < 1.
\end{equation}
The normwise relative backward error $\frac{\Vert \btld - \Atld \dhat \Vert_2}{\Vert \btld \Vert_2 + \Vert \Atld \Vert_2 \Vert \dhat \Vert_2}$ is guaranteed to converge to $pu$ if 
\begin{equation}\label{eq:backward_error_IRcondition}
    \left( c_1 \kappa(\Atld) + c_2 \right)u_g < 1,
\end{equation}
is satisfied, where
\begin{equation*}
    \frac{\Vert \what_i  - \Atld \ddeltah_i \Vert  }{ c_1 \Vert \Atld \Vert \Vert \ddeltah_i \Vert + c_2 \Vert \what_i \Vert} \leq u_g,
\end{equation*}
and $E_i$, $c_1$, and $c_2$ are functions of $\Atld$, $m+n$, $u_g$, and $\what_i$ with nonnegative entries. We thus need to determine values of $u_g$, $\Vert E_i \Vert$, $c_1$, and $c_2$ to determine for what $\kappa(A)$ we can expect FGMRES-IR (Algorithm~\ref{alg:augmented_approach}) to converge.

We tackle this using bounds on the forward and backward error of the mixed-precision FGMRES variant.
As shown in \cite[eq. (2.12)]{carson2024stability}, the relative normwise forward error is bounded by
\begin{equation*}\label{eq:forward_error_mpfgmres}
     \frac{\Vert \ddelta_i - \ddeltah_i \Vert_2}{\Vert \ddelta_i \Vert_2} \leq  \frac{1.3c(n,k)}{1 - \rho} \zeta \kappa_2(M_L^{-1} \Atld M_R^{-1}) \kappa_2(M_R)
\end{equation*}
and the relative normwise backward error of the augmented system (note that this is not the backward error of the least-squares problem) is bounded by \cite[Corollary 2.2]{carson2024stability}
\begin{equation*}
   \frac{\Vert \what_i  - \Atld \ddeltah_i \Vert_2  }{ \Vert \Atld \Vert_2 \Vert \ddeltah_i \Vert_2 +  \Vert \what_i \Vert_2} \leq   \frac{1.3 c(n,k)}{1 - \rho} \zeta \kappa_2(M_L),
\end{equation*}
where the second order terms are ignored, $c(n,k)$ is a constant depending on $n$ and $k$, $\rho < 1$ depends on $n$, the number of FGMRES iterations $k$, $u$, $u_R$, $M_R$, and $\Zhat_k$ (the preconditioned basis matrix arising in FGMRES), and $\zeta$ depends on $u$, $u_A$, $u_L$, $M_L$, $M_R$, $\Atld$, $\Zhat_k$, $\ddeltah_i$, and $\shat_i$; see \cite{carson2024stability} for further details. We note that $\zeta$ is expected to dominate the term $\frac{1.3 c(n,k)}{1 - \rho} \zeta $, and the achievable backward error depends on the interaction between the precisions $u$, $u_A$, and $u_L$, and the error in applying $M_L^{-1}$ and computing the matrix-vector products with $\Atld$; guidance on how to achieve $\frac{1.3 c(n,k)}{1 - \rho} \zeta = \mathcal{O}(u)$ is provided in \cite[Section 2.2]{carson2024stability}. 

Since in our case $\kappa_2(M_L) = \kappa_2(M_R)$, we can set 
\begin{gather*}
    u_g = \frac{1.3 c(n,k)}{1 - \rho} \zeta \kappa_2(M_L),\\
    \Vert E_i \Vert \leq \kappa_2(M_L^{-1} \Atld M_R^{-1}) \quad \textrm{ and } \\
    c_1 = c_2 = 1.
\end{gather*}
Then \eqref{eq:forward_error_IRcondition} holds if
\begin{equation}\label{eq:fe_IR_cond_2}
     \frac{1.3 c(n,k)}{1 - \rho} \zeta \kappa_2(M_L) \kappa_2(M_L^{-1} \Atld M_R^{-1})< 1
\end{equation}
and \eqref{eq:backward_error_IRcondition} requires
\begin{equation}\label{eq:be_IR_cond_2}
    \frac{1.3 c(n,k)}{1 - \rho} \zeta \kappa_2(M_L) \kappa(\Atld) <1.
\end{equation}
We assume in the following that the precisions for matrix-vector products with $\Atld$ and applying the preconditioners $M_L$ and $M_R$ in FGMRES are set so that 
\begin{equation}\label{eq:fgmres_precisions_condition}
     \frac{1.3 c(n,k)}{1 - \rho} \zeta = \mathcal{O}(u).
\end{equation}
Then ignoring the constants, \eqref{eq:fe_IR_cond_2} gives the following condition for the forward error to converge: 
\begin{equation}\label{eq:fe_IR_cond_interim}
   \kappa_2(M_L) \kappa_2(M_L^{-1} \Atld M_R^{-1}) < \mathcal{O}(u^{-1}).
\end{equation}
Note that in our case
\begin{equation*}
    \kappa_2(M_L) = \kappa_2(M_R) = \max \{1, \Vert \Rhat \Vert_2 \} \max \{1,  \Vert \Rhat^{-1} \Vert_2\} = \max \{ \Vert \Rhat \Vert_2, \Vert \Rhat^{-1} \Vert_2, \kappa(\Rhat) \}
\end{equation*}
and combining this with \eqref{eq:Rhat_norm_bound} - \eqref{eq:Rhat_cond_bound} gives
\begin{equation}\label{eq:kappa_ML_bound}
    \kappa_2(M_L) \leq \beta \max \{ \Vert \Omega A \Vert_2, \Vert (\Omega A)^{\dagger} \Vert_2, \beta \kappa_2(\Omega A) \} \eqqcolon \psi(\beta,\Omega A).
\end{equation}
Recall that we have have two options for bounding $\kappa_2(M_L^{-1} \Atld M_R^{-1})$. Using \eqref{eq:precond_augmented_condition_bound_no_scaling}, \eqref{eq:ARhat_inv_norm_bound}, and \eqref{eq:ARhat_inv_pinv_norm_bound} we obtain 
\begin{equation}
\kappa(M_L^{-1} \Atld_{\alpha} M_R^{-1}) \leq
    \begin{cases}
        1 + \beta \Vert AR^{-1}\Vert_2,  & \text{ if } \Vert ( A\Rhat^{-1})^{\dagger}\Vert_2 \leq 1/\sqrt{2}, \\
        \frac{2 + 2 \beta \Vert AR^{-1}\Vert_2}{\sqrt{1 + 4 / \beta \Vert ( AR^{-1})^{\dagger}\Vert_2^2} - 1  } & \text{otherwise } 
    \end{cases}
\end{equation}
and then ignoring the constants and combining this with \eqref{eq:fe_IR_cond_interim} and \eqref{eq:kappa_ML_bound} gives the following condition for the forward error to converge:
\begin{equation*}
   \beta    \max \left\{1, \left(\sqrt{1 + 4 \beta^{-1} \Vert ( AR^{-1})^{\dagger}\Vert_2^{-2}} - 1 \right)^{-1} \right\} \Vert AR^{-1}\Vert_2 \psi(\beta,\Omega A) < u^{-1}.
\end{equation*}
If we apply the optimal scaling $\alpha$, then \eqref{eq:L2cond_Bjorks}, \eqref{eq:ARhat_inv_cond_bound},\eqref{eq:fe_IR_cond_interim}, and \eqref{eq:kappa_ML_bound} give
\begin{equation*}
\beta^2 \kappa_2(A R^{-1}) \psi(\beta,\Omega A) < u^{-1}.
\end{equation*}
Regarding the backward error, the convergence criteria is 
\begin{equation*}
   \psi(\beta,\Omega A) \kappa(\Atld) < u^{-1}.
\end{equation*}
We summarize these results in a theorem below and then comment on them. 
\begin{theorem}
    Assume that the least-squares problem \eqref{eq:LS_problem} is solved via Algorithm~\ref{alg:augmented_approach}, the assumptions of Theorem~\ref{th:finite_recision_error_R} are satisfied, and the precisions in FGMRES are set so that \eqref{eq:fgmres_precisions_condition} holds and $\psi(\beta,\Omega A)$ is as in \eqref{eq:kappa_ML_bound}. 
    Then the relative forward error of the augmented system reaches the limiting value of $4pu_r\textrm{cond}(\Atld,d)+ u$ if
    \begin{equation}\label{eq:fe_converg_bound1}
 \beta    \max \left\{1, \left(\sqrt{1 + 4 \beta^{-1} \Vert ( AR^{-1})^{\dagger}\Vert_2^{-2}} - 1 \right)^{-1} \right\} \Vert AR^{-1}\Vert_2 \psi(\beta,\Omega A) < u^{-1}.
\end{equation}
If the optimal scaling $\alpha = 2^{-1/2} \sigma_{min}(A \Rhat^{-1})$ is applied in step~\ref{irstep:fgmres_solve} of Algorithm~\ref{alg:augmented_approach}, then the forward error convergence condition can be replaced by
\begin{equation}\label{eq:fe_converg_bound2}
\beta^2 \kappa_2(A R^{-1}) \psi(\beta,\Omega A)  < u^{-1}.
\end{equation}
The normwise relative backward error of the augmented system reaches the limiting value $pu$ if
\begin{equation}\label{eq:be_IR_cond_interim}
\psi(\beta,\Omega A) \kappa(\Atld) < u^{-1}.
\end{equation}
\end{theorem}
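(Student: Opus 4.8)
The plan is to assemble the bounds derived earlier in this section into the three stated criteria, combining the abstract iterative-refinement framework of Carson and Higham \cite{carson2018accelerating} with the mixed-precision FGMRES error bounds of \cite{carson2024stability}. The starting point is the Carson--Higham result: the relative forward error $\Vert d-\dhat_i\Vert/\Vert d\Vert$ converges to $4pu_r\,\textrm{cond}(\Atld,d)+u$ once the computed FGMRES updates satisfy \eqref{eq:forward_error_IRcondition}, and the normwise relative backward error of the augmented system converges to $pu$ once \eqref{eq:backward_error_IRcondition} holds; so the proof reduces to verifying these two inequalities for the mixed-precision FGMRES solve in step~\ref{irstep:fgmres_solve}.

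First I would substitute the forward- and backward-error bounds for mixed-precision FGMRES from \cite{carson2024stability}. Since our split-preconditioned setting has $\kappa_2(M_L)=\kappa_2(M_R)$, those bounds permit the identifications $u_g=\frac{1.3c(n,k)}{1-\rho}\zeta\,\kappa_2(M_L)$, $\Vert E_i\Vert\le\kappa_2(M_L^{-1}\Atld M_R^{-1})$, and $c_1=c_2=1$, so that \eqref{eq:forward_error_IRcondition} becomes \eqref{eq:fe_IR_cond_2} and \eqref{eq:backward_error_IRcondition} becomes \eqref{eq:be_IR_cond_2}. Imposing the precision assumption \eqref{eq:fgmres_precisions_condition} and dropping dimension-dependent constants collapses these to \eqref{eq:fe_IR_cond_interim} and the analogous backward-error condition $\kappa_2(M_L)\,\kappa(\Atld)<\mathcal{O}(u^{-1})$.

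Next I would bound the remaining condition numbers. For $\kappa_2(M_L)$, combine $\kappa_2(M_L)=\max\{\Vert\Rhat\Vert_2,\Vert\Rhat^{-1}\Vert_2,\kappa(\Rhat)\}$ with \eqref{eq:Rhat_norm_bound}--\eqref{eq:Rhat_cond_bound} of Theorem~\ref{th:finite_recision_error_R} to obtain $\kappa_2(M_L)\le\psi(\beta,\Omega A)$ as in \eqref{eq:kappa_ML_bound}; this immediately yields the backward-error criterion \eqref{eq:be_IR_cond_interim}. For $\kappa_2(M_L^{-1}\Atld M_R^{-1})$ there are two routes. Without scaling, feed \eqref{eq:ARhat_inv_norm_bound}--\eqref{eq:ARhat_inv_pinv_norm_bound} into Bj\"orck's bound \eqref{eq:precond_augmented_condition_bound_no_scaling}, then combine the result with \eqref{eq:fe_IR_cond_interim} and \eqref{eq:kappa_ML_bound} to reach \eqref{eq:fe_converg_bound1}. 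With the optimal scaling $\alpha=2^{-1/2}\sigma_{min}(A\Rhat^{-1})$, feed \eqref{eq:ARhat_inv_cond_bound} into \eqref{eq:L2cond_Bjorks} to get $\kappa_2(M_L^{-1}\Atld M_R^{-1})\le 2\beta^2\kappa_2(AR^{-1})$ and combine as before to reach \eqref{eq:fe_converg_bound2}.

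The main obstacle I anticipate is not any single inequality but the bookkeeping needed to apply the FGMRES bounds of \cite{carson2024stability} uniformly across all iterations of the refinement loop: the quantities $\rho$, $\zeta$, and $c(n,k)$ depend on iteration-dependent data (the preconditioned Arnoldi basis $\Zhat_k$, the computed update $\ddeltah_i$, and $\shat_i$), so one must argue that \eqref{eq:fgmres_precisions_condition} is attainable at every step, that FGMRES is run long enough for its error estimate to be meaningful, and that the bound $\Vert E_i\Vert\le\kappa_2(M_L^{-1}\Atld M_R^{-1})$ holds independently of $i$. A secondary subtlety is that the FGMRES backward-error bound controls the backward error of the \emph{augmented} system rather than of the least-squares problem, so the Carson--Higham framework must be applied to $\Atld\dhat=\btld$ and only then translated into statements about $x$ and $r$.
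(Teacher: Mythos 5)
Your proposal reproduces the paper's argument: it invokes the Carson--Higham IR convergence conditions \eqref{eq:forward_error_IRcondition}--\eqref{eq:backward_error_IRcondition}, substitutes the mixed-precision FGMRES forward and backward error bounds from \cite{carson2024stability} to identify $u_g$, $\Vert E_i\Vert$, $c_1$, $c_2$, imposes \eqref{eq:fgmres_precisions_condition} to reach \eqref{eq:fe_IR_cond_interim}, and then bounds $\kappa_2(M_L)$ via \eqref{eq:kappa_ML_bound} and $\kappa_2(M_L^{-1}\Atld M_R^{-1})$ via \eqref{eq:precond_augmented_condition_bound_no_scaling} (unscaled) or \eqref{eq:L2cond_Bjorks} (optimally scaled) together with Theorem~\ref{th:finite_recision_error_R}. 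The caveats you raise at the end about iteration-dependence of $\rho$, $\zeta$, and $\Zhat_k$ and about the augmented-system versus least-squares backward error are legitimate and the paper handles them in the same way you propose (by assumption and by interpreting the result for the augmented system), so this is the same proof.
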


Note that even if optimal scaling for $\Atld$ is applied and $\kappa(\Atld) \approx \kappa(A)$, the bound for the backward error is more restrictive than the forward error bounds. Note however that the backward error is bounded by the forward error and thus it is enough to satisfy \eqref{eq:fe_converg_bound1} or \eqref{eq:fe_converg_bound2}. 

We can now comment more on the conditions \eqref{eq:fe_converg_bound1} and \eqref{eq:fe_converg_bound2}. We assume that $\Omega$ is chosen to be a subspace embedding. We make the following observations:
\begin{itemize}
    \item The term $\psi(\beta,\Omega A)$ is expected to dominate in all the conditions.
    \item If the conditions in Theorem~\ref{th:finite_recision_error_R} are satisfied, that is, $\kappa(A) < u_s^{-1}$ and $\kappa(A) < u_{QR}^{-1}$, then $\beta$ grows moderately with the problem dimension, the values $\Vert \Omega A \Vert_2$, $\Vert (\Omega A)^{\dagger} \Vert_2$, and $\kappa_2(\Omega A) $ stay close to $\Vert  A \Vert_2$, $\Vert A^{\dagger} \Vert_2$, and $\kappa_2(A)$, respectively, and both $\Vert ( AR^{-1})^{\dagger}\Vert_2$ and $\Vert AR^{-1}\Vert_2$ are close to 1, and thus we can expect LSIR to converge when $\kappa(A)$ is safely less than $u^{-1}$. This is in contrast to the convergence theory for iterative refinement for linear systems of equations where we require $\kappa(\Atld)$ to be safely less than $u^{-1}$.
    \item Note that setting $u_s > u$ is thus allowed by the theory, but we can guarantee LSIR convergence only when the low precision does not regularize the problem too much, that is, we need to choose $u_s$ according to $\kappa_2(A)$.
    \item Assume that $\kappa(A) > u_s^{-1}$. Then we can replace $A$ with its lower precision version $A_s$ in the results of Theorem~\ref{th:finite_recision_error_R} as discussed in Section~\ref{sec:sketching_regularized}. If the optimal scaling for the preconditioned augmented system is used, then $\kappa_2(M_L^{-1} \Atld M_R^{-1}) \leq 2 \kappa_2(A R_s^{-1})$ holds and $\kappa_2(M_L)$ depends on the conditioning of $R_s$.
    The regularization by casting to a lower precision can give $\kappa_2(R_s) \leq \kappa_2(R)$, however $\kappa_2(A R_s^{-1})$ can be significantly larger than $\kappa_2(A R^{-1})$. Some information useful for preconditioning can be obtained from the regularized $A_s$, but caution should be exercised when considering $\kappa_2(A) \gg u_s^{-1}$.
\end{itemize}

\section{Numerics for dense problems}\label{sec:numerics}

We illustrate the analysis with simple numerical experiments performed in MATLAB R2023b\footnote{The code is available at \url{https://github.com/dauzickaite/LSIRrndprec/}}. The aim of the experiments is to show that, as predicted by the theory, the mixed precision sketched preconditioner reduces the condition number of coefficient matrices and enables LSIR to converge. Providing detailed recommendations for efficient implementation is out of scope of this paper.

The least-squares problem \eqref{eq:LS_problem} is constructed with a synthetic dense $A$ generated as a \textit{'randsvd'} matrix from the MATLAB test matrices gallery with $m=10^3$ and $n = 10^2$, geometrically distributed singular values, and various choices of $\kappa_2(A)$. The right-hand side $b$ is a random vector with entries drawn from a uniform distribution in the interval $(0,1)$ and normalized to have a unit norm. Such a right-hand side gives $\Vert r \Vert_2 = \Vert b - Ax \Vert_2 \approx 1$ and thus the sensitivity of the least-squares problem depends on $\kappa_2(A)^2$; we note that when $\Vert r \Vert_2$ is small the sensitivity depends on $\kappa_2(A)$ instead; see, e.g., \cite[Section 20.1]{high:ASNA2}. 
We only test this large residual setting since it is the case where the augmented system approach to LSIR is expected to be most advantageous over other LSIR approaches in terms of refining the solution $x$; see, e.g., \cite{carson2024comparison}.
We generate the sketching matrix as $\Omega = (4n)^{-1/2} G$, where $G$ is a random $4n \times m$ matrix with entries drawn from a standard normal distribution. There is no scaling for the augmented system, that is, $\alpha=1$.

The precisions in Algorithms~\ref{alg:generating_R} and \ref{alg:augmented_approach} are set so that $u_s \leq u$, $u_{QR} = u$, $u_r = u^2$. FGMRES is run with $u_A = u_L = u_R = u$. If LSIR does not converge in 30 iterations and $\kappa_2(A) < u_s^{-1}$, then we set $u_A = u_L = u_R = u^2$ and rerun the refinement loop, that is, steps~\ref{irstep:while_start}-\ref{irstep:while_end} of Algorithm~\ref{alg:augmented_approach}. 
We test the following settings: $(u_s, u, u_r)=$ (half, single, double), (single, single, double), (half, double, quad), (single, double, quad), (double, double, quad); note that the unit roundoff is $2^{-11}$ for half, $2^{-24}$ for single, $2^{-53}$ for double, and $2^{-113}$ for quad. MATLAB native single and double precisions are used, half precision is simulated via the \textit{chop} library \cite{HighamChop}, and quadruple (quad) precision is simulated via the Advanpix Multiprecision Computing Toolbox \cite{advanpix}. When $u$ is set to single, we store $A$ and $b$ in single precision.

In Algorithm~\ref{alg:augmented_approach}, the MATLAB implementation of LSQR is run for $2n$ iterations or until the tolerance reaches $10^{-6}$ if $u$ is set to single and $10^{-12}$ if $u$ is set to double. We compute the `true' solution $x^*$ to \eqref{eq:LS_problem} using MATLAB backslash in arithmetic that is simulated to be accurate to 64 digits using the Advanpix toolbox. The same accuracy is used to compute the `true' residual $r^* = b - A x^*$. LSIR is run for 30 iterations or until the relative errors in both $x$ and $r$ satisfy
\begin{equation*}
    \frac{\Vert r^* - \rhat_i \Vert_2}{\Vert r^* \Vert_2} \leq 4 u \quad \textrm{and} \quad \frac{\Vert x^* - \xhat_i \Vert_2}{\Vert x^* \Vert_2} \leq 4u.
\end{equation*}
FGMRES is terminated after 50 iterations or when the tolerance reaches the same values as for LSQR.

We compute the condition numbers and norms of the preconditioned matrices. The results when QR in Algorithm~\ref{alg:generating_R} is computed in single precision (Figure~\ref{fig:condition_numbers_uqr_single})
and in double precision (Figure~\ref{fig:condition_numbers_uqr_double})
show that as long as $\kappa_2(A) < u_s^{-1}$, preconditioning with $\Rhat$ keeps the condition numbers of both $M_L^{-1} \Atld M_R^{-1}$ and $ A \Rhat^{-1}$ at $\mathcal{O}(1)$. If, however, $\kappa_2(A) > u_s^{-1}$, then $\kappa_2(M_L^{-1} \Atld M_R^{-1})$ grows significantly due to the size of $\Vert (A \Rhat^{-1})^{\dagger} \Vert_2$; the value of $\Vert A\Rhat^{-1} \Vert_2$ always stays close to 2. These results agree with our theoretical observations. In the case when $\kappa_2(A) > u_s^{-1}$, $\Rhat$ is still effective in reducing $\kappa_2(A\Rhat^{-1})$ compared to $\kappa_2(A)$ by approximately a factor of $u_s^{-1}$. Note that as predicted by our analysis, we obtain $\Rhat$ of the same quality when we use both $u_{QR} = u_s$ and $u_{QR} < u_s$.

The initial solves with LSQR give solutions and residuals of similar quality in the same or very similar number of iterations if $\kappa_2(A)$ is sufficiently smaller than $u_s^{-1}$; see Tables~\ref{tab:lsqr_solve_us_half_uqr_single} and \ref{tab:lsqr_solve_u_double}. If, however, this is not the case, we clearly obtain a worse preconditioner and LSQR needs significantly more iterations to converge or convergence is not reached in the preset number of iterations. Note that although an impractical number of iterations are required for $u_s$ set to a precision such that $\kappa_2(A) > u_s^{-1}$, the method is still able to reach the same accuracy as with $u_s$ set to a higher precision. 

LSIR convergence results with $u$ set to single and double are presented in Tables~\ref{tab:lsir_iterations_u_single} and \ref{tab:lsir_iterations_u_double}, respectively. Note that the iterative refinement process converges in the cases where our theoretical analysis holds, that is, when $\kappa_2(A) < u_s^{-1}$. In order to achieve convergence when $\kappa_2(A)$ is close to $u^{-1}$, we have to increase the precisions in FGMRES for computing the matrix-vector products and applying the preconditioner, 
and possibly allow more FGMRES iterations in every LSIR iteration; this is needed for FGMRES to reach the required backward error, see, e.g., \cite{amestoy2024five} and \cite{carson2024stability}. We note that it is also possible to achieve LSIR convergence when $\kappa_2(A) > u_s^{-1}$ by significantly increasing the maximum number of FGMRES iterations, setting lower tolerance for FGMRES and/or increasing $u_A$, $u_L$ and $u_R$ in FGMRES. These combinations of $u_s$ and $\kappa_2(A)$ are however not covered by our analysis and the FGMRES parameters are highly problem dependent and require tuning in practice.

\begin{figure}[htbp]
\setlength\figureheight{6cm}
\setlength\figurewidth{0.99\textwidth}
    \centering
    \begin{subfigure}[h]{0.7\linewidth}
    \centering
\begin{tikzpicture}
\begin{axis}[
width = 0.5\figurewidth,
height = \figureheight,
at = {(0\figurewidth,0.5\figureheight)},
    xmode=log,
    ymode=log,
    xlabel={$\kappa_2(A)$},
    legend style={
    fill=none,
    font=\small
    },
     legend pos=outer north east,
    title={}
]
\addplot table[row sep=\\] {
1e+00  7.44e+00 \\
1e+01  7.44e+00 \\
1e+02  7.45e+00 \\
1e+03  9.29e+00 \\
1e+04  3.40e+02 \\
1e+05  2.48e+04 \\
1e+06  1.80e+06 \\
1e+07  1.57e+08 \\
};
\addlegendentry{$\kappa_2(M_L^{-1} \Atld M_R^{-1})$}

\addplot table[row sep=\\] {
1e+00  2.03e+00 \\
1e+01  2.03e+00 \\
1e+02  2.03e+00 \\
1e+03  1.86e+00 \\
1e+04  1.79e+00 \\
1e+05  1.67e+00 \\
1e+06  1.67e+00 \\
1e+07  1.60e+00 \\
};
\addlegendentry{$\Vert A \Rhat^{-1} \Vert_2$}

\addplot table[row sep=\\] {
1e+00  1.46e+00 \\
1e+01  1.46e+00 \\
1e+02  1.46e+00 \\
1e+03  1.74e+00 \\
1e+04  1.19e+01 \\
1e+05  1.05e+02 \\
1e+06  8.97e+02 \\
1e+07  8.48e+03 \\
};
\addlegendentry{$\Vert (A \Rhat^{-1})^{\dagger} \Vert_2$}

\addplot table[row sep=\\] {
1e+00  2.97e+00 \\
1e+01  2.96e+00 \\
1e+02  2.96e+00 \\
1e+03  3.24e+00 \\
1e+04  2.15e+01 \\
1e+05  1.76e+02 \\
1e+06  1.50e+03 \\
1e+07  1.36e+04 \\
};
\addlegendentry{$\kappa_2( A \Rhat^{-1})$}

\addplot[color=green, mark=diamond] table[row sep=\\] {
1e+00  2.62e+00 \\
1e+01  1.63e+02 \\
1e+02  1.62e+04 \\
1e+03  1.69e+06 \\
1e+04  1.34e+08 \\
1e+05  4.49e+08 \\
1e+06  9.59e+08 \\
1e+07  3.10e+08 \\
};
\addlegendentry{$\kappa_2(\Atld)$}

\end{axis}
\end{tikzpicture}
\caption{$u_s$ half, $u_{QR}$ single}
\end{subfigure}\\
\begin{subfigure}[h]{0.5\textwidth}
\centering
\begin{tikzpicture}
\begin{axis}[
width = 0.5\figurewidth,
height = \figureheight,
at = {(0\figurewidth,0.5\figureheight)},
    xmode=log,
    xlabel={$\kappa_2(A)$},
    title={}
]

\addplot table[row sep=\\, x index=0, y index=1] {
1e+00 7.45e+00 \\
1e+01 7.45e+00 \\
1e+02 7.45e+00 \\
1e+03 7.45e+00 \\
1e+04 7.45e+00 \\
1e+05 7.45e+00 \\
1e+06 7.44e+00 \\
1e+07 7.65e+00 \\
};

\addplot table[row sep=\\, x index=0, y index=1] {
1e+00 2.03e+00 \\
1e+01 2.03e+00 \\
1e+02 2.03e+00 \\
1e+03 2.03e+00 \\
1e+04 2.03e+00 \\
1e+05 2.03e+00 \\
1e+06 2.03e+00 \\
1e+07 2.03e+00 \\
};

\addplot table[row sep=\\, x index=0, y index=1] {
1e+00 1.46e+00 \\
1e+01 1.46e+00 \\
1e+02 1.46e+00 \\
1e+03 1.46e+00 \\
1e+04 1.46e+00 \\
1e+05 1.46e+00 \\
1e+06 1.46e+00 \\
1e+07 1.49e+00 \\
};

\addplot table[row sep=\\, x index=0, y index=1] {
1e+00 2.97e+00 \\
1e+01 2.97e+00 \\
1e+02 2.97e+00 \\
1e+03 2.97e+00 \\
1e+04 2.97e+00 \\
1e+05 2.97e+00 \\
1e+06 2.97e+00 \\
1e+07 3.01e+00 \\
};
\end{axis}
\end{tikzpicture}
\caption{$u_s$ single, $u_{QR}$ single}
\end{subfigure}
\caption{Condition numbers and norms when precisions $u_s$ and $u_{QR}$ in Algorithm~\ref{alg:generating_R} are set to half/single and single, respectively. $\kappa_2(\Atld)$ is the same in both cases. Note the different scales for the y axis.}
    \label{fig:condition_numbers_uqr_single}
\end{figure}
\begin{figure}[htbp]
\setlength\figureheight{6cm}
\setlength\figurewidth{0.99\textwidth}
    \centering
    \begin{subfigure}[b]{0.8\textwidth}
        \centering
        \begin{tikzpicture}
        \begin{axis}[
width = 0.5\figurewidth,
height = \figureheight,
at = {(0\figurewidth,0.5\figureheight)},
            xmode=log,
            ymode=log,
            ytick = {1e0,1e3,1e6,1e9,1e12,1e15,1e18},
            xlabel={$\kappa_2(A)$},
            ylabel={},
            legend style={
                fill=none,
                font=\small
            },
            legend pos=outer north east,
        ]
        \addplot table[row sep=\\] {
            x y \\
            1e+02  7.45e+0 \\
            1e+04  3.40e+02 \\
            1e+06  1.81e+06 \\
            1e+08  1.58e+10 \\
            1e+10 1.31e+14 \\
            1e+12 4.81e+17 \\
            1e+14 1.03e+17 \\
            1e+15 1.30e+17 \\
        };
        \addlegendentry{$\kappa_2(M_L^{-1} \Atld M_R^{-1})$}

        \addplot table[row sep=\\] {
            x y \\
            1e+02  2.02 \\
            1e+04  1.80 \\
            1e+06  1.67 \\
            1e+08  1.65 \\
            1e+10 1.55 \\
            1e+12 1.56 \\
            1e+14 1.47 \\
            1e+15 1.49 \\
        };
        \addlegendentry{$\Vert A \Rhat^{-1} \Vert_2$}

        \addplot table[row sep=\\] {
            x y \\
            1e+02  1.46 \\
            1e+04  1.19e+01 \\
            1e+06  8.98e+02 \\
            1e+08  8.43e+04 \\
            1e+10 7.82e+06 \\
            1e+12 6.98e+08 \\
            1e+14 6.32e+10 \\
            1e+15 6.72e+11 \\
        };
        \addlegendentry{$\Vert (A \Rhat^{-1})^{\dagger} \Vert_2$}

        \addplot table[row sep=\\] {
            x y \\
            1e+02  2.96 \\
            1e+04  2.15e+01 \\
            1e+06  1.50e+03 \\
            1e+08  1.39e+05 \\
            1e+10 1.22e+07 \\
            1e+12 1.09e+09 \\
            1e+14 9.30e+10 \\
            1e+15 1.00e+12 \\
        };
        \addlegendentry{$\kappa_2( A \Rhat^{-1})$}

         \addplot[color=green, mark=diamond] table[row sep=\\] {
            x y \\
            1e+02  1.62e+04 \\
            1e+04  1.62e+08 \\
            1e+06  1.62e+12 \\
            1e+08  1.39e+16 \\
            1e+10 7.00e+17 \\
            1e+12 1.11e+17 \\
            1e+14 1.44e+17 \\
            1e+15 1.75e+17 \\
        };
        \addlegendentry{$\kappa_2(\Atld)$}
        \end{axis}
        \end{tikzpicture}
        \caption{$u_s$ half, $u_{QR}$ double}
    \end{subfigure}
    \begin{subfigure}[b]{0.5\textwidth}
        \centering
        \begin{tikzpicture}
        \begin{axis}[
width = 0.48\figurewidth,
height = \figureheight,
at = {(0\figurewidth,0.5\figureheight)},
            xmode=log,
            ymode=log,
            ytick = {1e0,1e3,1e6,1e9,1e12,1e15,1e18},
            xlabel={$\kappa_2(A)$},
        ]
        \addplot table[row sep=\\] {
            x y \\
            1e2  7.45 \\
            1e4  7.45 \\
            1e6  7.44 \\
            1e8  5.64e1 \\
            1e10 3.98e5 \\
            1e12 3.37e9 \\
            1e14 2.71e13 \\
            1e15 2.71e15 \\
        };

        \addplot table[row sep=\\] {
            x y \\
            1e2  2.03 \\
            1e4  2.03 \\
            1e6  2.03 \\
            1e8  1.90 \\
            1e10 1.81 \\
            1e12 1.80 \\
            1e14 1.69 \\
            1e15 1.67 \\
        };

        \addplot table[row sep=\\] {
            x y \\
            1e2  1.46 \\
            1e4  1.46 \\
            1e6  1.46 \\
            1e8  4.67 \\
            1e10 4.09e2 \\
            1e12 3.77e4 \\
            1e14 3.46e6 \\
            1e15 3.51e7 \\
        };

        \addplot table[row sep=\\] {
            x y \\
            1e2  2.97 \\
            1e4  2.97 \\
            1e6  2.97 \\
            1e8  8.92 \\
            1e10 7.41e2 \\
            1e12 6.80e4 \\
            1e14 5.86e6 \\
            1e15 5.86e7 \\
        };

        \addplot[color=green, mark=diamond] table[row sep=\\] {
            x y \\
            1e+02  1.62e+04 \\
            1e+04  1.62e+08 \\
            1e+06  1.62e+12 \\
            1e+08  1.39e+16 \\
            1e+10 7.00e+17 \\
            1e+12 1.11e+17 \\
            1e+14 1.44e+17 \\
            1e+15 1.75e+17 \\
        };
        \end{axis}
        \end{tikzpicture}
        \caption{$u_s$ single, $u_{QR}$ double}
    \end{subfigure}\hfill
    \begin{subfigure}[b]{0.45\textwidth}
        \centering
        \begin{tikzpicture}
        \begin{axis}[
width = 0.47\figurewidth,
height = \figureheight,
at = {(0\figurewidth,0.5\figureheight)},
            xmode=log,
            xlabel={$\kappa_2(A)$}
        ]
        \addplot table[row sep=\\] {
            x y \\
            1e2  7.45 \\
            1e4  7.45 \\
            1e6  7.45 \\
            1e8  7.45 \\
            1e10 7.45 \\
            1e12 7.45 \\
            1e14 7.45 \\
            1e15 7.43 \\
        };

        \addplot table[row sep=\\] {
            x y \\
            1e2  2.03 \\
            1e4  2.03 \\
            1e6  2.03 \\
            1e8  2.03 \\
            1e10 2.03 \\
            1e12 2.03 \\
            1e14 2.03 \\
            1e15 2.03 \\
        };

        \addplot table[row sep=\\] {
            x y \\
            1e2  1.46 \\
            1e4  1.46 \\
            1e6  1.46 \\
            1e8  1.46 \\
            1e10 1.46 \\
            1e12 1.46 \\
            1e14 1.46 \\
            1e15 1.46 \\
        };

        \addplot table[row sep=\\] {
            x y \\
            1e2  2.97 \\
            1e4  2.97 \\
            1e6  2.97 \\
            1e8  2.97 \\
            1e10 2.97 \\
            1e12 2.97 \\
            1e14 2.97 \\
            1e15 2.96 \\
        };
        \end{axis}
        \end{tikzpicture}
        \caption{$u_s$ double, $u_{QR}$ double}
    \end{subfigure}

    \caption{As in Figure~\ref{fig:condition_numbers_uqr_single}, but $u_{QR}$ is set to double, and $u_s$ is set to half/single/double. $\kappa_2(\Atld)$ is the same in all three cases. Note the different scales for the y axis.}
    \label{fig:condition_numbers_uqr_double}
\end{figure}

\begin{table}[]
    \centering
    \begin{tabular}{c|c|c|c|c|c|c}
       $\kappa_2(A)$  & \multicolumn{2}{c}{LSQR it.} & \multicolumn{2}{|c|}{$\Vert x^* - \xhat \Vert_2 /\Vert x^* \Vert_2$} & \multicolumn{2}{c}{$\Vert r^* - \rhat \Vert_2 /\Vert r^* \Vert_2$ }\\
         & half &  single  & half &   single  &  half &  single \\
          \hline
1e+00 & 16  & 16 & 9.42e-06 & 9.46e-06 & 3.11e-06 & 3.13e-06 \\
1e+01 & 16  & 16 & 1.04e-05 & 9.99e-06 & 3.20e-06 & 3.13e-06 \\
1e+02 & 16  & 16 & 1.24e-05 & 1.04e-05 & 3.24e-06 & 3.15e-06 \\
1e+03 & 17  & 16 & 7.37e-05 & 3.40e-05 & 8.62e-06 & 6.81e-06 \\
1e+04 & 85  & 16 & 6.69e-04 & 4.24e-04 & 7.12e-05 & 4.83e-05 \\
1e+05 & 200 & 16 & 5.20e-02 & 4.72e-03 & 5.22e-03 & 4.40e-04 \\
1e+06 & 200 & 16 & 8.66e-01 & 3.52e-02 & 7.48e-02 & 3.75e-03 \\
1e+07 & 200 & 17 & 9.96e-01 & 2.93e-01 & 1.37e-01 & 3.11e-02
    \end{tabular}
    \caption{LSQR iteration counts and relative errors in the solution and residual when $u_s$ is set to half and single; $u_{QR}$ and $u$ are set to single.}
    \label{tab:lsqr_solve_us_half_uqr_single}
\end{table}

\begin{table}[]
    \centering
 \resizebox{\textwidth}{!}{   \begin{tabular}{c|c|c|c|c|c|c|c|c|c}
      $\kappa_2(A)$  & \multicolumn{3}{c}{LSQR it.} & \multicolumn{3}{|c|}{$\Vert x^* - \xhat \Vert_2 /\Vert x^* \Vert_2$} & \multicolumn{3}{c}{$\Vert r^* - \rhat \Vert_2 /\Vert r^* \Vert_2$ }\\
      & half & single & double  & half & single & double  & half & single & double\\
          \hline
1e+02 & 31  & 31  & 31 & 2.41e-11 & 2.40e-11 & 2.40e-11 & 7.35e-12 & 6.57e-12 & 6.57e-12 \\
1e+04 & 110 & 32  & 32 & 2.19e-12 & 1.31e-11 & 1.28e-11 & 3.79e-12 & 4.46e-12 & 4.50e-12 \\
1e+06 & 200 & 32  & 32 & 7.45e-01 & 5.91e-11 & 5.14e-11 & 5.21e-02 & 9.02e-12 & 8.38e-12 \\
1e+08 & 200 & 54  & 32 & 1e00 & 6.01e-09 & 3.43e-09 & 1.44e-01 & 8.40e-10 & 4.89e-10 \\
1e+10 & 200 & 200 & 32 & 1e00     & 1.59e-01 & 4.10e-07 & 1.75e-01 & 1.25e-02 & 3.76e-08 \\
1e+12 & 200 & 200 & 33 & 1e00     & 9.99e-01 & 3.61e-05 & 2.21e-01 & 1.10e-01 & 4.27e-06 \\
1e+14 & 200 & 200 & 34 & 1e00     & 1e00     & 2.94e-03 & 2.34e-01 & 1.44e-01 & 3.58e-04 \\
1e+15 & 200 & 200 & 34 & 1e00     & 1e00     & 8.35e-02 & 2.36e-01 & 1.53e-01 & 5.17e-03
    \end{tabular}}
    \caption{As in Table~\ref{tab:lsqr_solve_us_half_uqr_single}, but both $u_{QR}$ and $u$ are set to double, and $u_s$ is set to half, single, and double. }
    \label{tab:lsqr_solve_u_double}
\end{table}

\begin{table}[]
    \centering
    \begin{tabular}{c|c|c|c|c}
     $\kappa_2(A)$     & \multicolumn{2}{c}{ LSIR it.} & \multicolumn{2}{|c}{FGMRES it.}\\
         &  half &  single  &  half &  single  \\
         \hline
1e+00 & 1  & 1 & 39  & 39                    \\
1e+01 & 1  & 1 & 39  & 39                    \\
1e+02 & 1  & 1 & 39  & 39                    \\
1e+03 & 1  & 1 & 45  & 40                    \\
1e+04 & 9 & 1 & 450 & 41                    \\
1e+05 & -  & 2 & -   & 78                    \\
1e+06 & -  & 2* & -   & 77* \\
1e+07 & -  & 2* & -   & 78*         
    \end{tabular}
    \caption{LSIR iterations and the total count of FGMRES iterations within LSIR (Algorithm~\ref{alg:augmented_approach}) for different values of $u_s$. Here $u$ is set to single, and $u_s$ is either half or single. - denotes that LSIR did not converge in 30 iterations. * denotes when $u_A$, $u_L$ and $u_R$ in FGMRES are set to double.}
    \label{tab:lsir_iterations_u_single}
\end{table}

\begin{table}[]
    \centering
    \begin{tabular}{c|c|c|c|c|c|c}
     $\kappa_2(A)$     & \multicolumn{3}{c}{ LSIR it.} & \multicolumn{3}{|c}{FGMRES it.}\\
         &  half &  single  & double &  half &  single  & double \\
         \hline
    1e+02 & 1 & 1 & 1  & 50  & 50 & 50  \\
1e+04 & 7 & 1 & 1  & 350 & 50 & 50  \\
1e+06 & - & 1 & 1  & -   & 50 & 50  \\
1e+08 & - & - & 1  & -   & -  & 50  \\
1e+10 & - & - & 2  & -   & -  & 100 \\
1e+12 & - & - & 2*  & -   & -  & 138* \\
1e+14 & - & - & 2*  & -   & -  & 137* \\
1e+15 & - & - & 6* & -   & -  & 409*
    \end{tabular}
    \caption{As in Table~\ref{tab:lsir_iterations_u_single}, but $u$ is set to double, and $u_s$ is either half, single, or double. - denotes that LSIR did not converge in 30 iterations. * denotes when $u_A$, $u_L$ and $u_R$ in FGMRES are set to quad and the maximum number of FGMRES iterations is increased to 80.}
    \label{tab:lsir_iterations_u_double}
\end{table}

\section{Conclusions}\label{sec:conclusions}

In this paper, we provide theoretical analysis of a mixed precision approach to generating a sketched preconditioner for least-squares problems. We show that the computed $R$-factor $\Rhat$ of the sketched problem is close to the exact $R$-factor when the sketching precision $u_s$ is chosen such that $u_s \kappa_2(A)<1$ is satisfied, that is, $u_s$ is a precision which does not regularize the smallest singular values of $A$. Then $\Rhat$ is an effective preconditioner for iterative least-squares solvers. If we set $u_s$ such that $u_s \kappa_2(A)>1$ and thus the regularization because of sketching in lower precision is significant, then $\Rhat$ may still be effective in reducing $\kappa_2(A\Rhat^{-1})$ compared to $\kappa_2(A)$, however it does not appear to be effective in ensuring the convergence of an iterative least-squares solver in a small number of iterations. In such a setting, the practitioner should thus carefully evaluate if for their particular application the savings because of sketching in lower precision are enough to offset the cost of additional solver iterations.

If the computed solution and the residual are required to be of high quality and thus an iterative refinement approach is necessary, our theoretical analysis shows that if we set $u_s$ such that $u_s \kappa_2(A)<1$, the computed preconditioner can be used to ensure the convergence of an FGMRES-based LSIR scheme without the need to scale the augmented system. Note that the sketching precision can be lower than the working precision $u$ or equal to it, so if $u_s \kappa_2(A)<1$, Algorithm \ref{alg:augmented_approach} is guaranteed to converge to its limiting accuracy, which depends on $u$ and $u_r$. If $u_s \kappa_2(A)<1$ is not satisfied, then FGMRES-based LSIR can still converge as observed in numerical experiments, but the cost of iterative refinement grows significantly and no theoretical guarantees are provided.

Previous work used a full QR factorization computed in some low precision $u_f$ to precondition a GMRES-based LSIR scheme, and in this case convergence can be guaranteed even when $u_f \kappa_2(A)>1$ \cite{carson2020three}. This approach, however, requires an expensive-to-compute optimal scaling for the augmented system. We note that computing the full QR factorization even in low precision is expensive and may not be feasible in some applications. Our work thus shows that one can instead use modern alternatives, such as randomized QR factorizations, to construct preconditioners for GMRES-based iterative refinement for least-squares problems that have significantly more rows than columns.

\bibliographystyle{siam}
\bibliography{bibl}

\end{document}